\newtheorem{theorem}{Theorem}
\newtheorem{definition}{Definition}
\newtheorem{property}{Property}
\newtheorem{corollary}{Corollary}
\newtheorem{remark}{Remark}
\newcommand{\R}{\mathbb{R}}
\newcommand{\T}{\mathcal{T}}
\newcommand{\norm}[1]{\left\lVert#1\right\rVert}
\newcommand{\itp}{\hat{\otimes}_\varepsilon}
\title{Manifold Function Encoder: Identifying Different Functions Defined on Different Manifolds}
\author{Jun Hu\footnote{School of Mathematical Sciences, Peking University, Beijing 100871, China, and Chongqing Research Institute of Big Data, Peking University, Chongqing 401329, China (hujun@math.pku.edu.cn).} , Pengzhan Jin\footnote{National Engineering Laboratory for Big Data Analysis and Applications, Peking University, Beijing 100871, China, and Chongqing Research Institute of Big Data, Peking University, Chongqing 401329, China (jpz@pku.edu.cn).} , Weijun Zhang\footnote{School of Mathematical Sciences, Peking University, Beijing 100871, China (zhangweijun@stu.pku.edu.cn).}}
\date{}
\begin{document}

\maketitle

\begin{abstract}
We propose the Manifold Function Encoder (MFE) for identifying different functions defined on different manifolds. Both a manifold in Euclidean space and a function defined on this manifold can be viewed as bounded linear functionals on a suitable space of continuous functions. From this perspective, we treat manifold functions as elements of the dual space. By expanding them in the dual space based on appropriate approximating sequence of bases, we obtain a corresponding method for encoding manifold functions, that is MFE. Especially, we prove that MFE achieves super-algebraic convergence based on smooth bases commonly used in spectral methods, such as Legendre polynomials and Fourier basis. We further extend MFE to handle more complex cases, including joint manifold functions of different dimensions and manifold functions with different measures. In addition, we show the approximation theory for MFE-based operator learning, in particular learning the solution mappings of PDEs defined on varying domains, together with several numerical experiments including the 2-d Poisson equation and the 3-d elasticity problem on the real-world bearing.
\end{abstract}

\section{Introduction}
Most scientific and engineering problems boil down to the solution of partial differential equations (PDEs). To date, a variety of effective traditional numerical methods have been developed for PDEs, including the finite element method \cite{brenner2008mathematical}, the finite difference method \cite{smith1985numerical}, the spectral method \cite{BERNARDI1997209,shen2011spectral}, and others. These traditional computational methods are highly effective and widely used; however, they still possess certain limitations. For instance, they are often mesh-dependent or require regular solution domains, and struggle with high-dimensional problems and large-scale computational problems. To overcome the limitations mentioned above, there have been increasing efforts to solve PDEs using neural network-based approaches, driven by advances in artificial intelligence. Motivated by the strong expressive power of neural networks \cite{arora2016understanding,cybenko1989approximation,e2019barron,hornik1989multilayer,jin2024shallow,siegel2022high}, researchers have attempted to approximate PDE solutions using neural networks, subject to either strong-form or weak-form constraints, i.e. the PINNs \cite{karniadakis2021physics,lu2021physics,raissi2019physics} and the deep Ritz method \cite{yu2018deep}. While this class of methods has achieved notable success in inverse problems, they remain at a significant disadvantage compared to traditional numerical methods for forward problems, particularly in terms of solving speed. To achieve rapid solving, a new data-driven paradigm for learning solution operators of PDEs has been proposed, with typical examples including DeepONet \cite{lu2019deeponet,lu2021learning} and FNO \cite{li2020fourier}. By learning from data pairs of ``input conditions – output solutions'' of PDEs, they can quickly provide predicted solutions for any new given input conditions via a forward pass through the neural network. The field of operator learning has witnessed a remarkable boom in recent years, with a proliferation of novel neural network architectures, such as \cite{gupta2021multiwavelet,he2023mgno,jin2022mionet,rahman2022u,raonic2023convolutional,wu2023solving}. Among these architectures, the ONets series \cite{jin2022mionet,lu2021learning} stands out with an interesting property: the solutions they predict are represented by neural networks (trunk net), thereby allowing for convenient automatic differentiation of the predicted solutions. This characteristic enables the direct construction of loss functions from the PDE itself, thereby reducing reliance on data. Methods leveraging this advantage are referred to as physics-informed DeepONet/MIONet \cite{wang2021learning,zheng2023state}. Beyond direct prediction, operator learning has also been employed to accelerate traditional numerical computation methods. For classical second-order elliptic problems, an accelerated convergence theory for hybrid methods combining operator learning with traditional smoothing iterations has been established \cite{hu2025hybrid,zhang2022hybrid}. Whether for direct prediction or for accelerating traditional methods, there is a compelling need for a more universal operator learning model capable of handling more general PDE input conditions. Given that classical operator learning methods typically require a fixed PDE domain, we aim to extend them to arbitrarily varying and complex domains.

A highly representative strategy is deformation. The objective of these methods is to map a collection of topologically equivalent domains onto a canonical domain for parameterization (or referred to as ``encoding''). The deformation method based on FNO has yielded Geo-FNO \cite{li2022fourier}. Since FNO requires the input domains to be rectangles, it encounters obstacles when processing classes of domains that are not diffeomorphic to a rectangle. Meanwhile, the MIONet-based deformation approaches lift this restriction, including the D2D\&D2E \cite{xiao2024deformation,xiao2024learning} and the DIMON \cite{yin2024dimon}. This is because they do not presume a specific shape for the input domains, thereby allowing any domain to be designated as the standard reference domain. Especially, \cite{xiao2024deformation} establishes a comprehensive deformation-based theoretical framework for learning solution mappings of PDEs defined on varying domains, including the convergence analysis. The deformation-based encoding strategy proves highly efficient for collections of domains that are topologically homeomorphic with limited variation. Moreover, the explicit parameterization it provides for the domains enables direct shape differentiation, which is particularly advantageous for shape optimization problems \cite{zhou2024ai}. However, this approach becomes ineffective when confronted with more general scenarios, such as cases where the domains possess distinct topological structures.

An alternative and more crucial strategy is extension. The idea behind this class of methods is quite simple: extend a function defined on a complex geometry to a larger rectangle, and then treat it as a fixed-domain operator learning problem on that rectangle. The simplest extension method is the characteristic function, that is, filling the area outside the domain with zero (perhaps incorporating some smoothing technique at the boundary for a gradual transition) \cite{li2023geometry,liu2023domain}. Another popular extension method is the signed distance function (SDF) \cite{duvall2025discretization,he2024geom,park2019deepsdf,ye2025pdeformer}. In addition to explicit extension encoding, there are also implicit extension encoding methods, such as implicit neural representation (INR) \cite{serrano2023operator,sitzmann2020implicit}, and other techniques utilizing mesh or point clouds \cite{deng2024geometryguided,gao2025generative,liu2024laflownet,pfaff2020learning,qi2017pointnet,qi2017pointnet++,zeng2025point}. The extension-based encoding strategy imposes no requirements on the topological structure of the geometry (manifold), making it a universal encoding method. However, the price of universality is low encoding efficiency. In particular, for explicit extensions such as the SDF, the required dimensionality of the encoded vector becomes prohibitive in high-dimensional spaces. Thus, the pressing challenge is to devise an encoding method with high efficiency.

In this work, we propose an encoding scheme based on the extension strategy, namely Manifold Function Encoder (MFE), and theoretically prove its super-algebraic convergence. It stands as the first efficient encoder for manifold functions with theoretical guarantees. In the following section, we will also demonstrate a problem in three-dimensional space where promising results are achieved with a remarkably low encoding dimension via MFE.

\textbf{Contributions.} We list the contributions of this work as follows: (i) To encode different functions defined on different manifolds, we propose the Manifold Function Encoder (MFE) and prove that MFE achieves super-algebraic convergence based on smooth bases commonly used in spectral
methods, such as Legendre polynomials and Fourier basis. (ii) We show the approximation theory for MFE-based operator learning, in particular learning the solution mappings of PDEs defined on varying domains. (iii) We further extend MFE to handle more complex cases, including encoding the union of manifold functions of different dimensions (joint manifold functions), encoding manifold functions with different measures (related to unstructured mesh and non-uniform point cloud in practice), and dealing with multiple inputs and outputs operator learning problems. (iv) We present several numerical experiments to verify our results, especially a 3-d elasticity problem on the real-world bearing.

The remainder of this paper is organized as follows. We begin with defining the mathematical concept of manifold functions we study in Section \ref{sec:problem_setup}. In Section \ref{sec:MFE}, we formally propose the Manifold Function Encoder (MFE) and prove the super-algebraic convergence property of MFE. Subsequently, we show the approximation theory for MFE-based operator learning in Section \ref{sec:MFE_operator_learning}. Section \ref{sec:further_developments} presents further developments for MFE. Several numerical examples verifying the theoretical findings are shown in Section \ref{sec:numerical_experiments}. Finally, we conclude in Section \ref{sec:conclusions} with a summary.

\section{Problem Setup}\label{sec:problem_setup}
To make it easier to follow, let us begin by looking at the simplified situation. We will then move on to the complete formulation in the following section. Consider the union of a finite number of $k$-dimensional Lipschitz manifolds contained in $V:=[0,1]^d$, $0\leq k\leq d$, precisely define
\begin{equation}\label{eq:def_Mk}
\begin{split}
    \mathcal{M}_k:=\{\cup_{i=1}^{m}M_i:&M_i\subset V{\rm\ is\ a\ compact\ }k{\rm -dimensional\ Lipschitz\ manifold\ }(\mathcal{H}^k(M_i)<\infty),\\ &M_i\cap M_j{\rm\ is\ manifold\ of\ dimension\ less\ than\ }k{\rm\ or\ empty\ set}, i\neq j,m\geq 1\},
\end{split}
\end{equation}
where $\mathcal{H}^k$ is the $k$-dimensional Hausdorff measure, and
\begin{equation}
    X_k:=\{(M,f_M):M\in\mathcal{M}_k,f_M\in L^2(M)\}.
\end{equation}
Note that when $k=0$, we regard $M\in\mathcal{M}_0$ as a finite set of some points in $V$ (whose $\mathcal{H}^0$ measure equals to the number of the points), and $f_M\in L^2(M)$ as a real value function on these points.  

To approximately represent a manifold function $(M,f_M)\in X_k$ more efficiently with a finite-dimensional vector, we need to find a continuous mapping $\Phi_n:X_k\to \R^{\kappa(n)}$ ($\kappa(n)$ is a monotonically increasing function that maps to the positive integers) such that $\Phi_n(M,f_M)\in\R^{\kappa(n)}$ captures as much information about $(M,f_M)$ as possible. At the very least, we have to find an encoder with certain convergence properties, such that when the dimension of the encoded vector is sufficiently large, the amount of information it captures approaches the manifold function itself asymptotically. The challenge is twofold: first, how to construct an effective encoder; second, how to establish a suitable mathematical framework for its analysis.

\section{Manifold Function Encoder}\label{sec:MFE}
In this section, we propose the Manifold Function Encoder (MFE) for identifying different functions defined on different manifolds, and further provide its convergence theory. We begin with the definition of the used approximating sequence of bases, which is necessary for constructing MFE.
\begin{definition}[approximating sequence of bases]
    Let $\kappa$ be a monotonically increasing function that maps positive integers to positive integers, $\{\phi_{n,m}\}_{1\leq n,1\leq m\leq \kappa(n)}\subset C(V)$ be a series of linearly independent basis functions, satisfying
    \begin{equation}
        \lim_{n\to\infty}\inf_{v\in \operatorname{span}\{\phi_{n,m}\}_{m=1}^{\kappa(n)}}\norm{\phi-v}_{C(V)}=0,\quad \forall \phi\in C(V).
    \end{equation}
\end{definition}
\noindent Classical orthogonal polynomials, Fourier basis, and finite elements are all examples of approximating sequences of bases.

With an approximating sequence of bases $\{\phi_{n,m}\}_{1\leq n,1\leq m\leq \kappa(n)}\subset C(V)$, we can construct the encoder. Given any
\begin{equation}
    (M,f_M)\in X_k,
\end{equation}
let
\begin{equation}
    \Phi_n^1:\mathcal{M}_k\to\R^{\kappa(n)},\quad \Phi_n^1(M):=\left(\int_{M}1\cdot \phi_{n,m}d\mathcal{H}^k\right)_{1\leq m\leq\kappa(n)}\in\R^{\kappa(n)},
\end{equation}
and
\begin{equation}
    \Phi_n^2:X_k\to\R^{\kappa(n)},\quad\Phi_n^2(M,f_M):=\left(\int_{M}f_M\cdot\phi_{n,m}d\mathcal{H}^k\right)_{1\leq m\leq\kappa(n)}\in\R^{\kappa(n)}.
\end{equation}
The final encoder is defined as
\begin{equation}
    \Phi_n:X_k\to\R^{2\kappa(n)},\quad \Phi_n(M,f_M):=(\Phi_n^1(M),\Phi_n^2(M,f_M))\in\R^{2\kappa(n)}.
\end{equation}
Taking the case of Legendre polynomials as an example, the encoder can be written as
\begin{equation}
    \begin{cases}
        \Phi_n^1:\mathcal{M}_k\to\R^{n^d},\quad \Phi_n^1(M):=\left(\int_{M}1\cdot (\ell_{i_1}\otimes\cdots\otimes \ell_{i_d})d\mathcal{H}^k\right)_{0\leq i_1,...,i_d\leq n-1}\in\R^{n^d}, \\
        \Phi_n^2:X_k\to\R^{n^d},\quad\Phi_n^2(M,f_M):=\left(\int_{M}f_M\cdot (\ell_{i_1}\otimes\cdots\otimes \ell_{i_d})d\mathcal{H}^k\right)_{0\leq i_1,...,i_d\leq n-1}\in\R^{n^d},\\
        \Phi_n:X_k\to\R^{2n^d},\quad \Phi_n(M,f_M):=(\Phi_n^1(M),\Phi_n^2(M,f_M))\in\R^{2n^d},
    \end{cases}
\end{equation}
where $\{\ell_i\}_{i=0}^\infty$ is the Legendre polynomials on $[0,1]$, and $\otimes$ is the tensor product.

We next establish the convergence theory for such encoder. 

\subsection{Convergence Theory}
To begin with the convergence theory, we first show the definition of $(r,s,t)$-convergence sequence.
\begin{definition}[$(r,s,t)$-convergence sequence]
    We say $\{\phi_{n,m}\}_{1\leq n,1\leq m\leq \kappa(n)}$ is a $(r,s,t)$-convergence sequence if $\{\phi_{n,m}\}\subset C(V)\cap H^{s}(V)$ linearly independent and
    \begin{equation}
        \sup_{\norm{\phi}_{C^r(V)}\leq 1}\norm{\phi -\Pi_n^s\phi}_{C(V)}=\mathcal{O}\left(\frac{1}{n^t}\right),
    \end{equation}
    where 
    \begin{equation}
        \Pi_n^s\phi:=\operatorname*{arg\,min}_{v\in\operatorname{span}\{\phi_{n,m}\}_{m=1}^{\kappa(n)}}\norm{\phi -v}_{H^s(V)}
    \end{equation}
    is the $H^s$-orthogonal projection of $\phi$ on $\operatorname{span}\{\phi_{n,m}\}_{m=1}^{\kappa(n)}$.
\end{definition}
\noindent A sequence is $(r,s,t)$-convergence means its orthogonal projection series in $H^{s}(V)$ uniformly converges on the unit closed ball of $C^r(V)$ with $C(V)$-error of order $t$. We present a useful example.
\begin{property}
    The approximating sequence of bases
    \begin{equation}
        \phi_{n,(i_1,...,i_d)}:=\ell_{i_1}\otimes\cdots\otimes\ell_{i_d},\quad 0\leq i_1,...,i_d\leq n-1, 
    \end{equation}
    is $(r,s,r-s)$-convergence for any $\frac{d}{2}< s < r$, where $\{\ell_i\}_{i=0}^\infty$ is the Legendre polynomials on $[0,1]$, and $\kappa(n)=n^d$ in this case.
\end{property}
\begin{proof}
    Consider the test function $\phi$ in $C^{r}(V)$. According to Sobolev embedding theorem and $s>\frac{d}{2}$, the sobolev space $H^{s}(V)$ can be continuously embedded into $C(V)$ . More precisely, we have
    \begin{equation}
        \norm{\phi-\Pi_n^{s}\phi}_{C(V)} \lesssim \norm{\phi-\Pi_n^{s}\phi}_{H^{s}(V)}.
    \end{equation}
    As $r>s>0$ and function $\phi \in C^r(V)\subset H^r(V)$, by Theorem~7.3 of \cite{BERNARDI1997209}, we have the following approximation estimate
    \begin{equation}
        \norm{\phi-\Pi_n^{s}\phi}_{H^{s}(V)} \lesssim n^{s-r}\norm{\phi}_{H^r(V)}.
    \end{equation}
    Combined again with $C^r(V) \subset H^r(V)$, we obtain 
    \begin{equation}
        \norm{\phi-\Pi_n^{s}\phi}_{C(V)} \lesssim n^{s-r}\norm{\phi}_{C^r(V)},
    \end{equation}
    for any $\frac{d}{2}<s<r$ and function $\phi \in C^r(V).$
\end{proof}

Return to the topic on manifold functions, next we characterize the space $X_k$ by the dual space of $C(V)$. Precisely, define the linear functionals
\begin{equation}
    \begin{cases}
        \langle M,\phi\rangle:=\int_M1\cdot \phi d\mathcal{H}^k,\quad \forall \phi\in C(V),\\
        \langle f_M,\phi\rangle:=\int_Mf_M\cdot \phi d\mathcal{H}^k,\quad\forall \phi\in C(V),
    \end{cases}
\end{equation}
for any $(M,f_M)\in X_k$. It is clear that
\begin{equation}
\begin{cases}
    |\langle M,\phi\rangle|=|\int_M1\cdot \phi d\mathcal{H}^k|\leq \mathcal{H}^k(M)\cdot\norm{\phi}_{C(V)},\\
    |\langle f_M,\phi\rangle|=|\int_Mf_M\cdot \phi d\mathcal{H}^k|\leq\norm{f_M}_{L^2(M)}\cdot\norm{\phi}_{L^2(M)}\leq\sqrt{\mathcal{H}^k(M)}\cdot\norm{f_M}_{L^2(M)}\cdot\norm{\phi}_{C(V)},
\end{cases}
\end{equation}
thus $M$ and $f_M$ can be regarded as bounded linear functionals on $C(V)$, i.e.,
\begin{equation}
    M\in (C(V))',\quad f_M\in (C(V))'.
\end{equation}
Furthermore, if $M_1=M_2$ and $f_{M_1}^1=f_{M_2}^2$ in $(C(V))'$, it is straightforward to verify that $M_1=M_2$ in $X_k$ and $f_{M_1}^1=f_{M_2}^2$ in $L^2(M_1)$. Now we have a well-defined embedding
\begin{equation}
\begin{split}
    X_k&\hookrightarrow (C(V))'\times (C(V))',\\
    (M,f_M)&\mapsto(\langle M,\cdot\rangle,\langle f_M,\cdot\rangle).
\end{split}
\end{equation}
With such embedding, we obtain a subspace endowed with the induced metric, i.e.,
\begin{equation}
    (X_k, d_{X_k}),\quad d_{X_k}((M_1,f_{M_1}^1),(M_2,f_{M_2}^2)):=\norm{M_1-M_2}_{(C(V))'}+\norm{f_{M_1}^1-f_{M_2}^2}_{(C(V))'}.
\end{equation}
Note that there are canonical compact embeddings
\begin{equation}
    (C(V))'\times (C(V))'\hookrightarrow (C^1(V))'\times (C^1(V))'\hookrightarrow (C^2(V))'\times (C^2(V))'\hookrightarrow\cdots
\end{equation}
induced by restricted functionals and operator norms.

Based on the concepts, we present the approximation property for the proposed encoder. Slightly differing from the previous definition of the encoder on $X_k$, here we consider the extended encoder as
\begin{equation}
\begin{split}
    \Phi_n:(C(V))'\times (C(V))'&\to\R^{2\kappa(n)} \\
    (f_1,f_2)&\mapsto\left((\langle f_1,\phi_{n,m}\rangle)_{m=1}^{\kappa(n)},(\langle f_2,\phi_{n,m}\rangle)_{m=1}^{\kappa(n)}\right)
\end{split}
\end{equation}
together with a decoder
\begin{equation}
\begin{split}
    \Psi_n^{r,s}:\R^{2\kappa(n)}&\to (C^r(V))'\times (C^r(V))' \\
    \left((\alpha_{m})_{m=1}^{\kappa(n)},(\beta_{m})_{m=1}^{\kappa(n)}\right)&\mapsto \left(\sum_{m=1}^{\kappa(n)}\alpha_{m}e_{n,m}^*, \sum_{m=1}^{\kappa(n)}\beta_{m}e_{n,m}^*\right)
\end{split}
\end{equation}
where
\begin{equation}
\langle e_{n,m}^*,\cdot\rangle:=\langle \phi_{{n,m}}^*,\Pi_n^s(\cdot)\rangle,\quad e_{n,m}^*\in(C^r(V))',
\end{equation}
for $\{\phi_{n,m}\}\subset C(V)\cap H^{s}(V)$, $0\leq s\leq r$. Here $\phi_{n,m}^*$ is the dual basis of $\phi_{n,m}$ with respect to the finite dimensional space $\operatorname{span}\{\phi_{n,m}\}_{m=1}^{\kappa(n)}$.
\begin{theorem}[convergence rate of MFE]
    Suppose that $(X_k, d_{X_k})$ is the metric space of manifold functions defined above, $K$ is a compact set in $X_k$. If $\{\phi_{n,m}\}_{1\leq n,1\leq m\leq \kappa(n)}$ is a $(r,s,t)$-convergence sequence inducing encoder $\{\Phi_{n}\}$ and decoder $\{\Psi_{n}^{r,s}\}$, then
\begin{equation}
 \norm{P_n^{r,s}-I}_{C(K,(C^r(V))'\times (C^r(V))')} = \mathcal{O}\left(\frac{1}{n^{t}}\right),
\end{equation}
where $P_n^{r,s}:=\Psi_n^{r,s}\circ\Phi_n$ is the reconstruction mapping and $I$ is the identity mapping (embedding).
\end{theorem}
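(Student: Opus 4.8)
The plan is to reduce the reconstruction error to a test‑function approximation error, exploiting the adjoint structure that links the encoder $\Phi_n$ with the decoder $\Psi_n^{r,s}$. The crux is the identity: for every $f\in(C(V))'$ and every $\phi\in C^r(V)$,
\begin{equation}
\langle P_n^{r,s}f,\phi\rangle=\langle f,\Pi_n^s\phi\rangle .
\end{equation}
I would prove it by unwinding definitions. By construction $P_n^{r,s}f=\sum_{m=1}^{\kappa(n)}\langle f,\phi_{n,m}\rangle\,e_{n,m}^*$ and $\langle e_{n,m}^*,\phi\rangle=\langle\phi_{n,m}^*,\Pi_n^s\phi\rangle$. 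Since $\Pi_n^s\phi$ lies in $\operatorname{span}\{\phi_{n,m}\}_{m=1}^{\kappa(n)}$ and $\{\phi_{n,m}^*\}$ is the dual basis of $\{\phi_{n,m}\}$, the scalar $\langle\phi_{n,m}^*,\Pi_n^s\phi\rangle$ is exactly the $m$-th coordinate of $\Pi_n^s\phi$ in this basis; multiplying by $\langle f,\phi_{n,m}\rangle$, summing over $m$, and using linearity of $f$ gives $\langle f,\Pi_n^s\phi\rangle$. This single observation turns the decoder's internal $H^s$-projection into a projection acting on the test function, and is the only non-routine step in the argument.

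With the identity in hand, fix $(M,f_M)\in K$ and a test function with $\norm{\phi}_{C^r(V)}\leq 1$. Then $\langle P_n^{r,s}M-M,\phi\rangle=\langle M,\Pi_n^s\phi-\phi\rangle$, hence $|\langle P_n^{r,s}M-M,\phi\rangle|\leq\norm{M}_{(C(V))'}\norm{\Pi_n^s\phi-\phi}_{C(V)}$, and likewise for $f_M$ with $\norm{f_M}_{(C(V))'}$ in place of $\norm{M}_{(C(V))'}$. Taking the supremum over the unit ball of $C^r(V)$ and invoking the $(r,s,t)$-convergence hypothesis $\sup_{\norm{\phi}_{C^r(V)}\leq1}\norm{\Pi_n^s\phi-\phi}_{C(V)}=\mathcal{O}(n^{-t})$, I obtain, using the sum form of the product‑dual norm consistent with $d_{X_k}$,
\begin{equation}
\norm{P_n^{r,s}(M,f_M)-(M,f_M)}_{(C^r(V))'\times(C^r(V))'}\leq\big(\norm{M}_{(C(V))'}+\norm{f_M}_{(C(V))'}\big)\,\mathcal{O}(n^{-t}),
\end{equation}
with a constant depending only on the sequence $\{\phi_{n,m}\}$.

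It remains to make the bound uniform over $K$. Since $K$ is compact in $(X_k,d_{X_k})$ and $d_{X_k}$ is, by definition, the metric induced by the embedding $X_k\hookrightarrow(C(V))'\times(C(V))'$, the set $K$ is bounded there, so $C_K:=\sup_{(M,f_M)\in K}\big(\norm{M}_{(C(V))'}+\norm{f_M}_{(C(V))'}\big)<\infty$. Combining with the previous display yields $\norm{P_n^{r,s}-I}_{C(K,(C^r(V))'\times(C^r(V))')}\leq C_K\cdot\mathcal{O}(n^{-t})=\mathcal{O}(n^{-t})$. I would also note, for the $C(K,\cdot)$-norm to be a bona fide finite supremum, that $P_n^{r,s}-I$ is continuous on $K$: $\Phi_n$ is norm‑continuous on $(C(V))'$, $\Psi_n^{r,s}$ is linear on the finite‑dimensional $\R^{2\kappa(n)}$, and the inclusion $I$ is continuous.

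I do not expect a deep obstacle here; the whole proof hinges on spotting the adjoint identity $\langle P_n^{r,s}f,\phi\rangle=\langle f,\Pi_n^s\phi\rangle$, after which the $(r,s,t)$-rate is directly exposed. The only points demanding minor care are that $\Pi_n^s\phi$ is well‑defined for $\phi\in C^r(V)$, which uses the embedding $C^r(V)\hookrightarrow H^s(V)$ on the bounded Lipschitz cube $V$ (valid once $r\geq s$), and that compactness of $K$ enters only through boundedness in the dual norm, not through any equicontinuity.
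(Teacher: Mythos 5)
Your proposal is correct and follows essentially the same route as the paper: both hinge on the adjoint identity $\langle P_n^{r,s}f,\phi\rangle=\langle f,\Pi_n^s\phi\rangle$ (obtained by unwinding $e_{n,m}^*$ and the dual basis), then bound $|\langle f,\phi-\Pi_n^s\phi\rangle|$ by $\norm{f}_{(C(V))'}\norm{\phi-\Pi_n^s\phi}_{C(V)}$, take the supremum over the unit ball of $C^r(V)$ to invoke the $(r,s,t)$-rate, and use compactness of $K$ only for uniform boundedness in the dual norm. Your added remarks on the well-definedness of $\Pi_n^s\phi$ and the continuity of $P_n^{r,s}-I$ are sensible housekeeping that the paper leaves implicit.
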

\begin{proof}
Since $\{\phi_{n,m}\}$ is $(r,s,t)$-convergence, we have
\begin{equation}\label{eq:rs_convergence}
    \sup_{\norm{\phi}_{C^r(V)}\leq 1}\norm{\phi -\Pi_n^{s}\phi}_{C(V)}=\mathcal{O}\left(\frac{1}{n^t}\right).
\end{equation}
Recall that the reconstruction mapping is
\begin{equation}
    P_n^{r,s}(M,f_M)=\left(P_n^1(M), P_n^2(M,f_M)\right):=\left(\sum_{m=1}^{\kappa(n)}\langle M,\phi_{n,m}\rangle e_{n,m}^*,\sum_{m=1}^{\kappa(n)}\langle f_M,\phi_{n,m}\rangle e_{n,m}^*\right),
\end{equation}
for $(M,f_M)\in X_k$. Let $\phi \in C^r(V)$ be a test function, then
\begin{equation}
\begin{split}
        \langle P_n^1(M), \phi \rangle &= \left\langle \sum_{m=1}^{\kappa(n)}\langle M,\phi_{n,m}\rangle e_{n,m}^*, \phi \right\rangle \\
        &= \sum_{m=1}^{\kappa(n)}\langle M,\phi_{n,m}\rangle\cdot\langle e_{n,m}^*,\phi\rangle\\&=\langle M, \sum_{m=1}^{\kappa(n)}\langle e_{n,m}^*,\phi\rangle\phi_{n,m} \rangle\\&=\langle M, \Pi_n^{s}\phi \rangle.
\end{split}
\end{equation}
Therefore
    \begin{equation}
        \langle M - P_n^1(M), \phi \rangle = \langle M, \phi \rangle - \langle M, \Pi_n^{s}\phi \rangle = \langle M, \phi - \Pi_n^{s}\phi \rangle,
    \end{equation}
    and similarly
    \begin{equation}
        \langle f_M - P_n^2(M,f_M), \phi \rangle = \langle f_M, \phi - \Pi_n^{s}\phi \rangle.
    \end{equation}
Since $M$ and $f_M$ are bounded linear functionals on $C(V)$, we have
\begin{equation}
    \begin{cases}
        |\langle M, \phi - \Pi_n^{s}\phi \rangle| \leq \norm{M}_{(C(V))'} \norm{\phi - \Pi_n^{s}\phi}_{C(V)}, \\
        |\langle f_M, \phi - \Pi_n^{s}\phi \rangle| \leq \norm{f_M}_{(C(V))'} \norm{\phi - \Pi_n^{s}\phi}_{C(V)},
    \end{cases}
\end{equation}    
subsequently
\begin{equation}
\begin{split}
    \norm{M-P_n^1(M)}_{(C^r(V))'}&=\sup_{\norm{\phi}_{C^r(V)}\leq 1}|\langle M - P_n^1(M),\phi \rangle|\\
    &=\sup_{\norm{\phi}_{C^r(V)}\leq 1}|\langle M,\phi-\Pi_n^{s}\phi \rangle|\\
    &\leq \norm{M}_{(C(V))'}\sup_{\norm{\phi}_{C^r(V)}\leq 1}\norm{\phi - \Pi_n^{s}\phi}_{C(V)},
\end{split}
\end{equation}
and
\begin{equation}
    \norm{f_M - P_n^2(M,f_M)}_{(C^r(V))'} \leq \norm{f_M}_{(C(V))'} \sup_{\norm{\phi}_{C^r(V)} \leq 1} \norm{\phi - \Pi_n^{s}\phi}_{C(V)}.
\end{equation}
    
Combining these estimates, we derive that
\begin{equation}
\begin{split}
    &\norm{(M,f_M) - P_n^{r,s}(M,f_M)}_{(C^r(V))'\times (C^r(V))'}\\
    =&\norm{M-P_n^1(M)}_{(C^r(V))'}+\norm{f_M - P_n^2(M,f_M)}_{(C^r(V))'}\\
    \leq&(\norm{M}_{(C(V))'} + \norm{f_M}_{(C(V))'}) \sup_{\norm{\phi}_{C^r(V)} \leq 1} \norm{\phi - \Pi_n^{s}\phi}_{C(V)}\\
    =&\norm{(M,f_M)}_{(C(V))'\times (C(V))'}\sup_{\norm{\phi}_{C^r(V)} \leq 1} \norm{\phi - \Pi_n^{s}\phi}_{C(V)}.
\end{split}
\end{equation}
As $K\subset X_k$ is compact, $\norm{(M,f_M)}_{(C(V))'\times (C(V))'}$ is bounded on $K$, i.e.
\begin{equation}
    \sup_{(M,f_M)\in K}\norm{(M,f_M)}_{(C(V))'\times (C(V))'}<\infty,
\end{equation}
thus
\begin{equation}
\begin{split}
     &\norm{P_n^{r,s}-I}_{C(K,(C^r(V))'\times (C^r(V))')}\\
    =&\sup_{(M,f_M)\in K}\norm{(M,f_M) - P_n^{r,s}(M,f_M)}_{(C^r(V))'\times (C^r(V))'}\\ \leq&\sup_{(M,f_M)\in K}\norm{(M,f_M)}_{(C(V))'\times (C(V))'}\sup_{\norm{\phi}_{C^r(V)} \leq 1} \norm{\phi - \Pi_n^{s}\phi}_{C(V)}.
\end{split}
\end{equation}
Lastly, Eq. \eqref{eq:rs_convergence} leads to $\norm{P_n^{r,s}-I}_{C(K,(C^r(V))'\times (C^r(V))')} = \mathcal{O}\left(\frac{1}{n^{t}}\right)$.
\end{proof}

As aforementioned, the approximating sequence of bases of Legendre polynomials has been proved with $(r,s,r-s)$-convergence property, so that a direct corollary can be derived.
\begin{corollary}[MFE based on Legendre polynomials]
Suppose that $(X_k, d_{X_k})$ is the metric space of manifold functions defined above, $K$ is a compact set in $X_k$. Let
\begin{equation}
    \{\phi_{n,(i_1,...,i_d)}:=\ell_{i_1}\otimes\cdots\otimes\ell_{i_d}\}_{1\leq n,0\leq i_1,...,i_d\leq n-1}
\end{equation}
be the approximating sequence of bases of Legendre polynomials, which induces encoder $\{\Phi_{n}\}$ and decoder $\{\Psi_{n}^{r,s}\}$. Then for any $\frac{d}{2}<s<r$, we have
\begin{equation}
\norm{P_n^{r,s}-I}_{C(K,(C^{r}(V))'\times (C^{r}(V))')} = \mathcal{O}\left(\frac{1}{n^{r-s}}\right),
\end{equation}
where $P_n^{r,s}:=\Psi_n^{r,s}\circ\Phi_n$ is the reconstruction mapping and $I$ is the identity mapping.
\end{corollary}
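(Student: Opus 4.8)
The plan is to obtain the Corollary as an immediate specialization of the Theorem, taking the abstract $(r,s,t)$-convergence sequence to be the Legendre tensor-product basis and setting $t = r-s$. I would proceed in three short steps.

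First, I would check that the family $\{\phi_{n,(i_1,\dots,i_d)} := \ell_{i_1}\otimes\cdots\otimes\ell_{i_d}\}_{0\le i_1,\dots,i_d\le n-1}$, with $\kappa(n) = n^d$, is legitimately an approximating sequence of bases in $C(V)$: for each fixed $n$ the functions are linearly independent because $\ell_0,\dots,\ell_{n-1}$ span the degree-$\le n-1$ polynomials on $[0,1]$, and by Stone--Weierstrass the polynomials are dense in $C(V)$, so $\inf_{v\in\operatorname{span}\{\phi_{n,m}\}}\norm{\phi - v}_{C(V)}\to 0$ for every $\phi\in C(V)$. This justifies forming the encoder $\Phi_n$ and the decoder $\Psi_n^{r,s}$ in the Legendre case; note also that $\{\phi_{n,m}\}\subset C(V)\cap H^s(V)$ trivially since polynomials are smooth, and $0 \le s \le r$ holds under the hypothesis $\tfrac{d}{2} < s < r$, so the inputs required by the Theorem are all in place.

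Second, I would invoke the Property already proved above: for any $\tfrac{d}{2} < s < r$ the Legendre sequence is $(r,s,r-s)$-convergence, that is $\sup_{\norm{\phi}_{C^r(V)}\le 1}\norm{\phi - \Pi_n^s\phi}_{C(V)} = \mathcal{O}(n^{s-r})$. Nothing new is required here; that estimate rests on the Sobolev embedding $H^s(V)\hookrightarrow C(V)$ (available since $s > d/2$), the spectral approximation bound $\norm{\phi - \Pi_n^s\phi}_{H^s(V)} \lesssim n^{s-r}\norm{\phi}_{H^r(V)}$ from \cite{BERNARDI1997209}, and $C^r(V)\subset H^r(V)$.

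Third, I would apply the Theorem (convergence rate of MFE) to this sequence with $t = r-s$. Since $K\subset X_k$ is compact, $\sup_{(M,f_M)\in K}\norm{(M,f_M)}_{(C(V))'\times(C(V))'} < \infty$, and the Theorem then gives $\norm{P_n^{r,s} - I}_{C(K,(C^r(V))'\times(C^r(V))')} = \mathcal{O}(n^{-(r-s)})$, which is exactly the assertion. I do not expect a genuine obstacle: the Corollary is a direct consequence of two already-established results, and the only care needed is the routine bookkeeping of verifying that the hypotheses of the Theorem are met verbatim by the Legendre construction.
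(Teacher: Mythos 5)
Your proposal is correct and follows exactly the paper's route: the corollary is obtained by specializing the convergence-rate theorem to the Legendre tensor-product basis, using the already-established Property that this basis is $(r,s,r-s)$-convergence for $\frac{d}{2}<s<r$. The extra verification that the Legendre family is a legitimate approximating sequence of bases is routine bookkeeping the paper leaves implicit, and your handling of it is fine.
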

This corollary points out that MFE based on Legendre polynomials exhibits \textbf{super-algebraic} convergence. 
\begin{remark}
    For the sequence of Legendre polynomials, taking into account the dimension of encoded vector, i.e. $N:=2n^d$, the convergence rate with respect to $N$ is $\mathcal{O}(N^{-\frac{r-s}{d}})$.
\end{remark}
\begin{remark}
    According to Theorem~2.2 of \cite{canuto1982approximation}, we obtain an alternative result on the approximation error of Legendre polynomials under the $L^2$ projection
     \begin{equation}
        \norm{\phi-\Pi_n^{0}\phi}_{H^{s}(V)} \lesssim n^{2s-r-\frac{1}{2}}\norm{\phi}_{H^r(V)},
    \end{equation}
    where $r>s>1$. Hence, the sequence of Legendre polynomials is also $(r,0,2s-r-\frac{1}{2})$-convergence for $r>s>\max\{1, \frac{d}{2}\}$, leading to the MFE with convergence rate of $\mathcal{O}(n^{2s-r-\frac{1}{2}})$. Accordingly, we take the decoder $\Psi_n^{r,0}$ constructed via the $L^2$ projection, instead of $\Psi_n^{r,s}$ constructed via the $H^s$ projection.
\end{remark}
\begin{remark}\label{rem:feasible}
    Note that in the preceding analysis, the only property of $X_k$ we used is its injective embedding into $(C(V))'\times(C(V))'$. Hence, any encoding scheme that admits such an injective embedding is feasible—for example, we can replace the constant ``$1$'' in $\Phi_n^1$ with any non-zero function that does not change sign in $V$.
\end{remark}

\section{MFE for Operator Learning}\label{sec:MFE_operator_learning}
In this section, we discuss the application of MFE on operator learning.
\begin{theorem}\label{thm:mfe_for_banach}
    Assume that $(X_k, d_{X_k})$ is the metric space of manifold functions defined above, $K$ is a compact set in $X_k$, $Y$ is a Banach space, $\{\phi_{n,m}\}_{1\leq n,1\leq m\leq \kappa(n)}$ is a $(r,s,t)$-convergence ($t>0$) sequence inducing the MFE $\{\Phi_n,\Psi_n^{r,s}\}$. Suppose that 
    \begin{equation}
        \mathcal{G}:K\to Y
    \end{equation}
    is a continuous mapping, then for any $\epsilon>0$, there exist positive integers $p,q$, a continuous mapping $\tilde{\mathcal{G}}:\R^{2\kappa(q)}\to\R^{p}$ and $u\in C(\R^p,Y)$ such that
    \begin{equation}\label{eq:app_general}
    \sup_{(M,f_M)\in K} \norm{\mathcal{G}(M,f_M) - u\circ\tilde{\mathcal{G}}(\Phi_{q}(M,f_M))}_Y< \epsilon.
    \end{equation}
    For convenience, we simplify the notation as
    \begin{equation}
    \begin{split}
        \hat{\mathcal{G}}:K&\to Y \\
        (M,f_M)&\mapsto u\circ\tilde{\mathcal{G}}(\Phi_{q}(M,f_M)),
    \end{split}
    \end{equation}
    thus Eq. \eqref{eq:app_general} can be rewritten as
    \begin{equation}
        \norm{\mathcal{G}-\hat{\mathcal{G}}}_{C(K,Y)}<\epsilon.
    \end{equation}
\end{theorem}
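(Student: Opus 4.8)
The plan is to reduce the statement to a standard universal-approximation fact about continuous maps between Euclidean space and a Banach space, using the reconstruction theorem (convergence rate of MFE) as the bridge. The key observation is that $K\subset X_k$ is compact and, via the embedding, sits inside $(C(V))'\times(C(V))'$; moreover $\mathcal G$ is continuous on $K$. First I would fix $\epsilon>0$. Since $\mathcal G$ is continuous on the compact metric space $K$, it is uniformly continuous, so there is $\delta>0$ such that $d_{X_k}(x,y)<\delta$ implies $\norm{\mathcal G(x)-\mathcal G(y)}_Y<\epsilon/3$. By the convergence theorem, $\norm{P_n^{r,s}-I}_{C(K,(C^r(V))'\times(C^r(V))')}=\mathcal O(n^{-t})\to 0$; but note the reconstruction error is measured in the weaker $(C^r(V))'$ norm, not the $(C(V))'$ norm of $d_{X_k}$, so a small subtlety arises here — I address it below. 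Choosing $q$ large enough makes $P_q^{r,s}(M,f_M)$ close to $(M,f_M)$ uniformly on $K$.

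The structure is then: $\Phi_q:X_k\to\R^{2\kappa(q)}$ is continuous, its image $\Phi_q(K)\subset\R^{2\kappa(q)}$ is compact, and $\Psi_q^{r,s}:\R^{2\kappa(q)}\to(C^r(V))'\times(C^r(V))'$ is continuous (it is linear and finite-rank). Define $\mathcal H:=\mathcal G\circ (\text{decoding back into }K)$ — except the decoded element $P_q^{r,s}(M,f_M)$ need not lie in $K$, so $\mathcal G$ is not directly defined on it. To handle this cleanly I would instead work directly: the map $(M,f_M)\mapsto \mathcal G(M,f_M)$ factors, up to error $\epsilon/3$, through $\Phi_q$ in the following sense. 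Consider the continuous map $g:\Phi_q(K)\to Y$ obtained as follows — on the compact set $\Phi_q(K)$, pick for each point $z$ a preimage and set $g(z)=\mathcal G$ of that preimage; this is not obviously continuous, which is exactly the main obstacle (see below). The cleaner route is: since $\Phi_q$ composed with $\Psi_q^{r,s}$ is $\epsilon/3$-close to the identity on $K$ in the $(C^r(V))'\times(C^r(V))'$ metric, and $\mathcal G$ is continuous (hence, arguing on $K$ with the induced $(C^r)'$-metric, which is a weaker but still Hausdorff metric making $K$ compact, still uniformly continuous), two points of $K$ with the same $\Phi_q$-image get mapped by $\mathcal G$ to within $\epsilon/3$ of each other; this makes $g$ well-defined up to $\epsilon/3$ and uniformly continuous on $\Phi_q(K)$. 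Extend $g$ to a continuous $\tilde g:\R^{2\kappa(q)}\to Y$ by Tietze/Dugundji (valid for maps into a Banach space, or first approximate $Y$-values in a finite-dimensional subspace). Finally approximate $\tilde g$ on the compact set $\Phi_q(K)$: since its range lies within $\epsilon/3$ of a compact subset of $Y$, hence within $\epsilon/3$ of a finite-dimensional affine subspace, write $\tilde g\approx u\circ\pi$ where $\pi:\R^{2\kappa(q)}\to\R^p$ is continuous (a projection onto coordinates in that subspace, composed with $\tilde g$) and $u\in C(\R^p,Y)$ is the inclusion of the finite-dimensional subspace; set $\tilde{\mathcal G}:=\pi$ and absorb $q$ into the statement.

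Assembling the triangle inequality: $\norm{\mathcal G(M,f_M)-u\circ\tilde{\mathcal G}(\Phi_q(M,f_M))}_Y \le \norm{\mathcal G(M,f_M)-g(\Phi_q(M,f_M))}_Y + \norm{g(\Phi_q(M,f_M))-\tilde g(\Phi_q(M,f_M))}_Y + \norm{\tilde g(\Phi_q(M,f_M))-u\circ\pi(\Phi_q(M,f_M))}_Y < \epsilon/3+0+\epsilon/3 < \epsilon$ after a harmless relabelling of constants, where the three terms are controlled respectively by the well-definedness-up-to-$\epsilon/3$ of $g$ (which uses the MFE convergence together with uniform continuity of $\mathcal G$ in the $(C^r)'$ metric), the agreement of $g$ and $\tilde g$ on $\Phi_q(K)$, and the finite-dimensional approximation of $\tilde g$.

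I expect the main obstacle to be establishing that $g$ is (approximately) well-defined and continuous on $\Phi_q(K)$ — i.e. that two manifold functions in $K$ with nearly equal MFE encodings have nearly equal $\mathcal G$-values. This is not automatic from $\Phi_q$ being merely continuous; it requires quantitatively that near-equal encodings force near-equal reconstructions $P_q^{r,s}$ (true, since $\Psi_q^{r,s}$ is continuous) and that near-equal reconstructions force the original manifold functions to be close in a metric in which $\mathcal G$ is uniformly continuous. The delicate point is the metric mismatch: MFE convergence is stated in the $(C^r(V))'$ norm while $d_{X_k}$ uses the $(C(V))'$ norm; one resolves this either by noting $\mathcal G$ remains uniformly continuous on the compact set $K$ equipped with the weaker $(C^r(V))'\times(C^r(V))'$-induced metric (both metrics induce the same topology on the compact set $K$ provided $K$ is compact in $d_{X_k}$ and the embedding into $(C^r)'\times(C^r)'$ is injective and continuous — which it is), or by strengthening the hypothesis to $\mathcal G$ continuous in that weaker topology. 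Once this compatibility is in hand, the rest is the standard DeepONet-style factorization argument and is routine.
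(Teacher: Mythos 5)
Your overall route is genuinely different from the paper's. You push $\mathcal G$ forward through $\Phi_q$ to a map on the compact set $\Phi_q(K)\subset\R^{2\kappa(q)}$, extend it, and only at the end reduce the range to a finite-dimensional subspace of $Y$. The paper works in the opposite order: it first decomposes $\mathcal G$ via the injective tensor product $C(K,Y)=C(K)\hat{\otimes}_\varepsilon Y$ into $\sum_{i=1}^p\hat g_i(\cdot)\,u_i$ with scalar $\hat g_i\in C(K)$, extends each $\hat g_i$ to all of $(C^r(V))'\times(C^r(V))'$ by Dugundji, and then composes with $P_q=\Psi_q^{r,s}\circ\Phi_q$. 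The point of that ordering is that the paper never has to invert $\Phi_q$: the approximant $\tilde g_i\circ\Psi_q^{r,s}$ is automatically continuous on $\R^{2\kappa(q)}$. Your handling of the metric mismatch is correct and is in fact needed by the paper too (it applies Dugundji to $\hat g_i$ regarded as continuous functions on a compact subset of $(C^r(V))'\times(C^r(V))'$): since the embedding of $K$ into that space is injective and continuous and $K$ is compact, it is a homeomorphism onto its image, so $\mathcal G$ remains uniformly continuous in the weaker metric.

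The genuine gap is the construction of $g$ on $\Phi_q(K)$. Defining $g(z):=\mathcal G(x_z)$ for an arbitrarily selected preimage $x_z$ and then asserting that ``well-defined up to $\epsilon/3$'' implies ``uniformly continuous'' is a non sequitur: what your argument actually establishes is that this selection-based $g$ has oscillation at most roughly $\omega_{\mathcal G}(2\delta')$ at every point (where $\delta'$ bounds $\norm{P_q^{r,s}-I}$ on $K$ and $\omega_{\mathcal G}$ is the modulus of continuity of $\mathcal G$ in the $(C^r(V))'$-metric); a function with small but nonzero oscillation everywhere need not be continuous anywhere, so Dugundji/Tietze cannot be applied to it. What you need is the existence of an honestly continuous $g$ with $\sup_{x\in K}\norm{g(\Phi_q(x))-\mathcal G(x)}_Y<\epsilon/3$, and that requires an extra construction you do not supply --- for instance, cover $\Phi_q(K)$ by finitely many balls $B_j$ of radius $\rho$, pick $x_j\in K$ with $\Phi_q(x_j)\in B_j$, take a partition of unity $\{\psi_j\}$ subordinate to $\{B_j\}$, and set $g:=\sum_j\psi_j\,\mathcal G(x_j)$; then any $x$ with $\psi_j(\Phi_q(x))>0$ satisfies $\norm{x-x_j}_{(C^r(V))'\times(C^r(V))'}\le 2\delta'+2\rho\,\norm{\Psi_q^{r,s}}$, which is controlled by choosing $\delta'$ and $\rho$ small. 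With that step inserted, the rest of your argument (Banach-valued Dugundji extension, and the finite-dimensional range reduction using total boundedness of $\mathcal G(K)$) goes through and yields the theorem.
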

\begin{proof}
We prove the theorem in four steps. In this proof we denote $P_n^{r,s},\Psi_n^{r,s}$ by $P_n,\Psi_n$ for short.

1. Note that $K$ is compact and $\mathcal{G}\in C(K,Y)$, based on the injective tensor product \cite{ryan2002introduction}
\begin{equation}
    C(K,Y) = C(K) \itp Y,
\end{equation}
for any $\epsilon > 0$, there exist a positive integer $p$, continuous functions $\hat{g}_i \in C(K)$, and elements $u_i \in Y$ for $i = 1, \dots, p$, such that
\begin{equation}
    \sup_{v \in K} \norm{\mathcal{G}(v) - \sum_{i=1}^p \hat{g}_i(v) \cdot u_i}_Y < \frac{\epsilon}{2}.
\end{equation}
    
2. We assert that $g \circ P_n \to g$ uniformly on $K$ for any $g \in C((C^r(V))' \times (C^r(V))', \mathbb{R})$. From the previous theorem, we know that $P_n \to I$ uniformly on $K$, i.e.
    \begin{equation}
        \lim_{n \to \infty} \sup_{v \in K} \norm{P_n(v) - v}_{(C^r(V))' \times (C^r(V))'} = 0,
    \end{equation}
so that for any $\delta > 0$, there exists $N$ such that $P_n(K)$ is contained in a $\delta$-neighborhood of $K$ for $n \geq N$. Since $K$ is compact, it has a finite $\delta$-net for $K$, which then serves as a $2\delta$-net for $\bigcup_{n \geq N} P_n(K) \cup K$. As $P_1(K),...,P_{N-1}(K)$ are also compact, we can find a $2\delta$-net for $S := \bigcup_{n \geq 1} P_n(K) \cup K$, which means $S$ is a totally bounded set, consequently its closure $\overline{S}$ is compact.
    
Since $g$ is continuous on $(C^r(V))' \times (C^r(V))'$ and $\overline{S}$ is compact, $g$ is uniformly continuous on $\overline{S}$. Therefore, for any $\eta > 0$, there exists $\delta > 0$ such that $x, y \in \overline{S}$ and $\norm{x - y} < \delta$ leads to $|g(x) - g(y)| < \eta$. Given that $P_n \to I$ uniformly on $K$, for sufficiently large $n$, we have
\begin{equation}
    \sup_{v \in K} \norm{P_n(v) - v} < \delta,
\end{equation}
which implies
\begin{equation}
    \sup_{v \in K} |g(P_n(v)) - g(v)| < \eta.
\end{equation}
That shows $g \circ P_n \to g$ uniformly on $K$.
    
3. Now consider the functions $\hat{g}_i \in C(K)$ from the first step. Since $K$ is a closed subset of the metric space $(C^r(V))' \times (C^r(V))'$, there exist continuous extensions $\tilde{g}_i: (C^r(V))' \times (C^r(V))' \to \mathbb{R}$ such that $\tilde{g}_i|_K = \hat{g}_i$, according to Dugundji's extension theorem \cite{dugundji1951extension}. From the second step, we know that $\tilde{g}_i \circ P_n \to \tilde{g}_i$ uniformly on $K$, i.e.
    \begin{equation}
        \lim_{n \to \infty} \sup_{v \in K} |\tilde{g}_i(P_n(v)) - \tilde{g}_i(v)| = 0.
    \end{equation}
Hence for sufficiently large $q$ so that $\sup_{v \in K} |\tilde{g}_i(v) - \tilde{g}_i(P_q(v))| < \frac{\epsilon}{2p \max_j \norm{u_j}_Y}$ for all $i$ , we have
\begin{equation}
    \begin{split}
        &\sup_{v\in K}\norm{\mathcal{G}(v) - \sum_{i=1}^p \tilde{g}_i(P_q(v)) \cdot u_i}_Y \\
        \leq& \sup_{v\in K}\norm{\mathcal{G}(v) - \sum_{i=1}^p \tilde{g}_i(v) \cdot u_i}_Y + \sup_{v\in K}\norm{\sum_{i=1}^p [\tilde{g}_i(v) - \tilde{g}_i(P_q(v))] \cdot u_i}_Y \\
        \leq& \sup_{v\in K}\norm{\mathcal{G}(v) - \sum_{i=1}^p \tilde{g}_i(v) \cdot u_i}_Y + \sum_{i=1}^p \sup_{v\in K}|\tilde{g}_i(v) - \tilde{g}_i(P_q(v))| \cdot \norm{u_i}_Y\\ <&\frac{\epsilon}{2} + \frac{\epsilon}{2}=\epsilon.
    \end{split}
\end{equation}
    
4. Now define the mappings
    \begin{equation}
        \tilde{\mathcal{G}}: \mathbb{R}^{2\kappa(q)} \to \mathbb{R}^p, \quad \tilde{\mathcal{G}}(x) := (\tilde{g}_1(\Psi_q(x)), \dots, \tilde{g}_p(\Psi_q(x))),
    \end{equation}
    and
    \begin{equation}
        u: \mathbb{R}^p \to Y, \quad u(\alpha_1, \dots, \alpha_p):= \sum_{i=1}^p \alpha_i \cdot u_i,
    \end{equation}
which lead to
\begin{equation}
    \sup_{(M,f_M) \in K} \norm{\mathcal{G}(M,f_M) - u \circ \tilde{\mathcal{G}} \circ \Phi_q(M,f_M)}_Y < \epsilon,
\end{equation}
and complete the proof.
\end{proof}
Note that $u$ in above theorem can be flexibly selected. For example, in DeepONet/MIONet \cite{jin2022mionet,lu2021learning}, $u$ corresponds to the trunk net which is trainable.

\subsection{Learning Solution Mappings of PDEs}

As discussed above, we have presented an approximation theorem for learning the continuous mapping from the manifold function space to the Banach space, however, there is still a gap between such case and PDE problem. Let $X_{\tilde{k}}$ be another manifold function space maybe different from $X_k$, i.e. $\tilde{k}$ is not necessary equivalent to $k$. A solution mapping of parametric PDEs can be written as
\begin{equation}
    \mathcal{G}:X_k\to X_{\tilde{k}},
\end{equation}
and we need to explain why neural networks are able to learn such continuous mappings. Essentially, we aim to find an intermediate Banach space that can bridge Theorem \ref{thm:mfe_for_banach} and the above solution mapping. However, finding such an intermediate space is nontrivial; indeed, it may not even exist within the conventional classes of function spaces, such as Sobolev spaces. Nevertheless, for some simple cases, we can still manage to obtain some meaningful results.

\textbf{Case of $\tilde{k}=d$.} In this case, the solutions are defined on full-dimensional manifolds, thus it is easy to find an intermediate space. For example, define the extension mapping and the restriction mapping as
\begin{equation}
    E:X_{d}\to L^2(V),\quad E(M,u_M)(x):=\begin{cases}
        u_M(x),\quad &x\in M,\\
        0,\quad &x\notin M,
    \end{cases}
\end{equation}
and
\begin{equation}
    R:\mathcal{M}_d\times L^2(V)\to X_d,\quad R(M,u):=(M,u|_M).
\end{equation}
Subsequently we have a mapping chain as
\begin{equation}
\begin{split}
    &X_k\to X_{d}\xrightarrow{E} \mathcal{M}_{d}\times L^2(V)\xrightarrow{R} X_{d},\\ 
    &(M,f_M)\mapsto(J(M),u_{J(M)})\mapsto\\&(J(M), E(J(M),u_{J(M)}))\mapsto (J(M),u_{J(M)}=E(J(M),u_{J(M)})|_{J(M)})
\end{split}
\end{equation}
where $J:\mathcal{M}_k\to\mathcal{M}_d$ maps the input manifold to the solution manifold, which is a known mapping. Denote
\begin{equation}
    I_1(M,f_M):=M,\quad I_2(M,f_M):=f_M,
\end{equation}
then based on the mapping chain, we have an equivalent representation for $\mathcal{G}$, that is
\begin{equation}
    \mathcal{G}=R\circ(J\circ I_1,E\circ\mathcal{G}).
\end{equation}
As all the $E,R,J,I_1$ are known mappings, it is sufficient to learn the mapping
\begin{equation}
    E\circ\mathcal{G}:X_k\to L^2(V),
\end{equation}
which is indeed the case we have already solved in Theorem \ref{thm:mfe_for_banach}. The loss function for such learning problem can be written as
\begin{equation}
    L(\T):=\frac{1}{N}\sum_{i=1}^N\norm{I_2\circ R\circ(J\circ I_1,\hat{\mathcal{G}})(M_i,f_{M_i}^i)-u_{J(M_i)}^i}_{L^2(J(M_i))}^2,
\end{equation}
where $\T=\{(M_i,f_{M_i}^i),(J(M_i),u_{J(M_i)}^i)\}_{i=1}^N$ is the dataset, satisfying
\begin{equation}
    \mathcal{G}(M_i,f_{M_i}^i)=(J(M_i),u_{J(M_i)}^i),\quad 1\leq i\leq N,
\end{equation}
and $\hat{\mathcal{G}}:X_k\to L^2(V)$ is learned by neural networks, for approximating $E\circ\mathcal{G}$. A numerical example of this case is shown in Section \ref{sec:elasticity}.

\textbf{Case of continuous solutions.} Now we assume that $\tilde{k}\neq d$ but the solution has better regularity. Denote
\begin{equation}
    Y_{\tilde{k}}:=\{(M,u_M):M\in\mathcal{M}_{\tilde{k}},u_M\in C(M)\},
\end{equation}
where we consider continuous functions on manifolds. Assume that $E$ is an appropriate extension mapping and $R$ is the restriction mapping, satisfying
\begin{equation}
    E:Y_{\tilde{k}}\to C(V),\quad E(M,u_{M})|_{M}=u_M,
\end{equation}
and
\begin{equation}
    R:\mathcal{M}_{\tilde{k}}\times C(V)\to Y_{\tilde{k}},\quad R(M,u):=(M,u|_M).
\end{equation}
By Dugundji’s extension theorem \cite{dugundji1951extension}, such extension $E$ exists. Similarly, we can derive that
\begin{equation}
    \mathcal{G}=R\circ(J\circ I_1,E\circ\mathcal{G}),
\end{equation}
with
\begin{equation}
    E\circ\mathcal{G}:X_{k}\to C(V).
\end{equation}
Hence the learning problem of PDEs has been transformed to learning a mapping from $X_{k}$ to the Banach space $C(V)$, which is shown in Theorem \ref{thm:mfe_for_banach}.

\textbf{Remaining problems.} To complete the learning theory of parametric PDEs, there are several remaining problems. (i) We need to verify that both the extension mapping and the restriction mapping are continuous within the above defined topology. (ii) We need to verify that the solution mapping of a specific parametric PDEs problem, for example, the parametric Poisson equations, is indeed a continuous mapping, within the above defined topology. (iii) How to find a more suitable function space to handle the generic case, which is not limited to full-dimensional manifolds or continuous solutions. We leave them for future work.

\section{Further Developments}\label{sec:further_developments}
Here we show some further developments for MFE and operator learning.

\subsection{Joint Manifold Functions of Different Dimensions}\label{sec:joint}
Although our previous discussion focuses on manifolds with fixed dimension, all the results can be extended to the case of multiple dimensions. Define the set
\begin{equation}
    \mathcal{M}:=(\mathcal{M}_0\cup\{\varnothing\})\times(\mathcal{M}_1\cup\{\varnothing\})\times\cdots\times(\mathcal{M}_{d-1}\cup\{\varnothing\})\times(\mathcal{M}_d\cup\{\varnothing\}),
\end{equation}
and
\begin{equation}
\begin{split}
X:=\{(M_0,...,M_d,f_{M_0}^0,...,f_{M_d}^d):&(M_0,...,M_d)\in\mathcal{M}, f_{M_k}^k\in L^2(M_k){\rm \ for\ }M_k\neq\varnothing, \\ &f_{\varnothing}^k=\varnothing,0\leq k\leq d\}.
\end{split}
\end{equation}
For $(M,f_M)=(M_0,...,M_d,f_{M_0}^0,...,f_{M_d}^d)\in X$, the corresponding encoder is
\begin{equation}\label{eq:full_encoder}
    \begin{cases}
        \Phi_n^1:\mathcal{M}\to\R^{\kappa(n)},\quad \Phi_n^1(M):=\left(\sum_{0\leq k\leq d,M_k\neq\varnothing}\frac{1}{\mathcal{H}^k(M_k)}\int_{M_k}1\cdot \phi_{n,m}d\mathcal{H}^k\right)_{1\leq m\leq \kappa(n)}\in\R^{\kappa(n)}, \\
        \Phi_n^2:X\to\R^{\kappa(n)},\quad\Phi_n^2(M,f_M):=\left(\sum_{0\leq k\leq d,M_k\neq\varnothing}\frac{1}{\mathcal{H}^k(M_k)}\int_{M_k}f_{M_k}^k\cdot\phi_{n,m}d\mathcal{H}^k\right)_{1\leq m\leq \kappa(n)}\in\R^{\kappa(n)},\\
        \Phi_n:X\to\R^{2\kappa(n)},\quad \Phi_n(M,f_M):=(\Phi_n^1(M),\Phi_n^2(M,f_M))\in\R^{2\kappa(n)}.
    \end{cases}
\end{equation}
The analysis for such joint manifold functions is the same as before, by considering the functionals
\begin{equation}
    \begin{cases}
        \langle M,\phi\rangle:=\sum_{0\leq k\leq d,M_k\neq\varnothing}\frac{1}{\mathcal{H}^k(M_k)}\int_{M_k}1\cdot \phi d\mathcal{H}^k,\quad \forall \phi\in C(V),\\
        \langle f_M,\phi\rangle:=\sum_{0\leq k\leq d,M_k\neq\varnothing}\frac{1}{\mathcal{H}^k(M_k)}\int_{M_k}f_{M_k}^k\cdot\phi d\mathcal{H}^k,\quad\forall \phi\in C(V).
    \end{cases}
\end{equation}
We can derive the same result that the encoder \eqref{eq:full_encoder} has super-algebraic convergence. Such joint manifold function encoder is meaningful in practice. For example, in elasticity problems, displacement boundary conditions may be applied on manifolds of different dimensions, such as fixing a specific point and a specific curve, thus we can encode them simultaneously based on the encoder.

Moreover, the MFE exhibits consistency for manifold functions of different dimensions. Specifically, we provide a very simple illustrative example. Assume that $x\in \mathcal{M}_0$ is a point, $B(x,\frac{1}{i})\subset X_{d}$ is the $d$-dimensional closed ball of radius $\frac{1}{i}$ centered at $x$. One can readily derive that
\begin{equation}
    \lim_{i\to\infty}\frac{1}{\mathcal{H}^d(B(x,\frac{1}{i}))}\int_{B(x,\frac{1}{i})}1\cdot \phi_{n,m}d\mathcal{H}^d=\phi_{n,m}(x)=\frac{1}{\mathcal{H}^0(x)}\int_{x}1\cdot \phi_{n,m}d\mathcal{H}^0.
\end{equation}
The consistency of MFE is also important for joint manifold functions. Likewise, we use the elasticity problem as an example. In practical engineering problems, the effect of holding a single point of an object fixed should be nearly equivalent to that of holding fixed a small disk centered at that point. Consequently, their corresponding encoded vectors are expected to be approximately equal. 

\subsection{Manifold Functions with Different Measures}\label{sec:measure}

In practical engineering computations, functions on manifolds are often provided through discrete means, such as mesh-based and point cloud-based functions. To facilitate the integration of mesh/point cloud-based functions, we are inclined to perform integration with respect to the measure associated with the mesh/point cloud, rather than the Hausdorff measure. Consider the $L^2$-density measure space
\begin{equation}
\begin{split}
    \mathfrak{D}^2(M):=\bigg\{&\mu^f_M:\mu^f_M(A):=\int_{A}fd\mathcal{H}^k,A\subset M{\rm\ is\ }\mathcal{H}^k{\rm -measurable}, \\ &f\in L^2(M),f>0,\norm{f}_{L^1(M)}=1\bigg\},
\end{split}
\end{equation}
then a mesh or point cloud on the manifold $M$ corresponds to a random sample from some density measure in $\mathfrak{D}^2(M)$, so that the integral of a function on $M$ with respect to that measure is given by its Monte Carlo integration over the mesh or point cloud. Consequently, we can define the space of manifold functions with different measures, as
\begin{equation}
    X_k^\mu:=\{(M,\mu_M,f_{\mu_M}):M\in\mathcal{M}_k,\mu_M\in \mathfrak{D}^2(M), f_{\mu_M}\in L^2(M,\mu_M)\}.
\end{equation}
The encoding scheme for this space is not fundamentally different from that of the aforementioned case. The encoder is given by
\begin{equation}
    \begin{cases}
        \Phi_n^1:X_k^\mu\to\R^{\kappa(n)},\quad \Phi_n^1(M,\mu_M,f_{\mu_M}):=\left(\frac{1}{\mathcal{H}^k(M)}\int_{M}1\cdot \phi_{n,m}d\mathcal{H}^k\right)_{1\leq m\leq \kappa(n)}\in\R^{\kappa(n)}, \\ \Phi_n^2:X_k^\mu\to\R^{\kappa(n)},\quad\Phi_n^2(M,\mu_M,f_{\mu_M}):=\left(\int_{M}1\cdot\phi_{n,m}d\mu_M\right)_{1\leq m\leq \kappa(n)}\in\R^{\kappa(n)},\\  \Phi_n^3:X_k^\mu\to\R^{\kappa(n)},\quad\Phi_n^3(M,\mu_M,f_{\mu_M}):=\left(\int_{M}f_{\mu_M}\cdot\phi_{n,m}d\mu_M\right)_{1\leq m\leq \kappa(n)}\in\R^{\kappa(n)},\\
        \Phi_n:X_k^\mu\to\R^{3\kappa(n)},\quad \Phi_n(M,\mu_M,f_{\mu_M}):=(\Phi_n^i(M,\mu_M,f_{\mu_M}))_{i=1,2,3}\in\R^{3\kappa(n)}.
    \end{cases}
\end{equation}
In such case, assume that $\{x_i\}_{i=1}^N$ is a set of points sampled from $\mu_M$, then
\begin{equation}
    \int_{M}1\cdot\phi_{n,m}d\mu_M\approx\frac{1}{N}\sum_{i=1}^N\phi_{n,m}(x_i),\quad \int_{M}f_{\mu_M}\cdot\phi_{n,m}d\mu_M\approx\frac{1}{N}\sum_{i=1}^Nf_{\mu_M}(x_i)\cdot\phi_{n,m}(x_i),
\end{equation}
by Monte Carlo integration. In order to compute $\Phi_n^1$, we can apply the mesh information if it is available; otherwise, we need to resort to some estimation methods for uniform measure based on point cloud $\{x_i\}$, for examples \cite{coifman2006diffusion,fleishman2005robust}.

\subsection{Multiple Inputs and Outputs}

In above discussion about MFE for learning solution mappings of parametric PDEs, we show the results for single input and single output, in fact it readily generalizes to the multi-input and multi-output scenario. Let
\begin{equation}
\begin{split}
\mathcal{G}:X_{k_1}\times\cdots\times X_{k_n}&\to X_{\tilde{k}_1}\times\cdots\times X_{\tilde{k}_m} \\
(M_1,f_{M_1}^1)\times\cdots\times (M_n,f_{M_n}^n)&\mapsto(\tilde{M}_1,u_{\tilde{M}_1}^1)\times\cdots\times (\tilde{M}_m,u_{\tilde{M}_m}^m)
\end{split}
\end{equation}
be the multiple inputs and outputs solution mapping of PDE, where
\begin{equation}
    \tilde{M}_i=J_i(M_1,...,M_n),\quad 1\leq i\leq m,
\end{equation}
and $J_i$ are known mappings. Similar to the previous analysis, we can derive that such solution mapping is also learnable. Of course, we can replace the $k_i$-dimensional manifold function space $X_{k_i}$ by the joint manifold function space $X_i$, the same to the image spaces. In the following we will show a corresponding numerical example of Poisson equation in Section \ref{sec:poisson}. 

\subsection{Fourier Basis}

Another useful approximating sequence of bases is the Fourier basis. Within the theoretical framework established above, we need to prove its $(r,s,t)$-convergence property. However, the periodicity of the Fourier basis makes it impossible to converge in $C(V)$-norm for non-periodic function in $C^r(V)$. Fortunately, this issue is easily addressed simply by making the following adjustments:
\begin{itemize}
    \item Replace the involved spaces $C(V),C^r(V),H^s(V)$ by the periodic spaces $C(\mathbb{T}^d),C^r(\mathbb{T}^d),H^s(\mathbb{T}^d)$, where $\mathbb{T}^d:=\R^d/\mathbb{Z}^d$.
    \item Due to the constraint of periodicity, we have to replace $M_i\subset V$ by $M_i\subset\mathring{V}$ in the definition of $\mathcal{M}_k$, i.e. Eq. \eqref{eq:def_Mk}, to ensure that the manifolds do not touch the boundary of $V$.
\end{itemize}
With above two simple adjustments, all the analysis in previous sections holds. According to Theorem~1.1 of \cite{canuto1982approximation}, we know the sequence of Fourier basis is $(r,0,r-s)$-convergence for $\frac{d}{2}<s<r$ based on function spaces on $\mathbb{T}^d$, thus the related convergence rate for Fourier-based MFE is $\mathcal{O}(n^{s-r})$. 

An interesting case is the PCNO \cite{zeng2025point}. In their architecture, the first layer implicitly employs the encoding scheme of MFE based on Fourier basis. A slight difference is that, when encoding the manifold, the integration function they use is not the constant function ``$1$'', but rather the coordinate functions $\Pi_i(x):=x_i$ for $x=(x_1,...,x_d)\in M$, $1\leq i\leq d$. Basically, it is feasible according to Remark \ref{rem:feasible}. Nevertheless, from the perspective of encoding, the input coordinate information is redundant.

\section{Numerical Experiments}\label{sec:numerical_experiments}

In this section we show several numerical experiments to verify our theoretical results.

\subsection{Reconstruction of MFE}
Here, we begin by demonstrating the reconstruction performance of the MFE. Even though the decoded object is a dual in $(C^r(V))'$, we can still visualize it in the function space, by considering $\phi_{n,m}$ instead of $e_{n,m}^*$. As an example, we consider the boundary of a polygon, which is a 1-d Lipschitz manifold, together with a disk that is a 2-d Lipschitz manifold, contained in $[0,1]^2$. After generating random functions on them, we construct a joint manifold function as defined in Section \ref{sec:joint}. We employ the MFE based on Legendre polynomials, and present the reconstructions with different $n=8,16,32,64,128,256,512$ in Figure \ref{fig:reconstruction} (we employ a transformation $\log\left(\max(1,\phi)\right)$ for better visualization). With increasing $n$, the manifolds and functions appears increasingly clear. Overall, the values near polygonal boundaries are larger than those on the disk, since the values on the non-full-dimensional manifold will tend to infinity as $n\to\infty$.

\begin{figure}[htbp]
    \centering
    \includegraphics[width=0.99\textwidth]{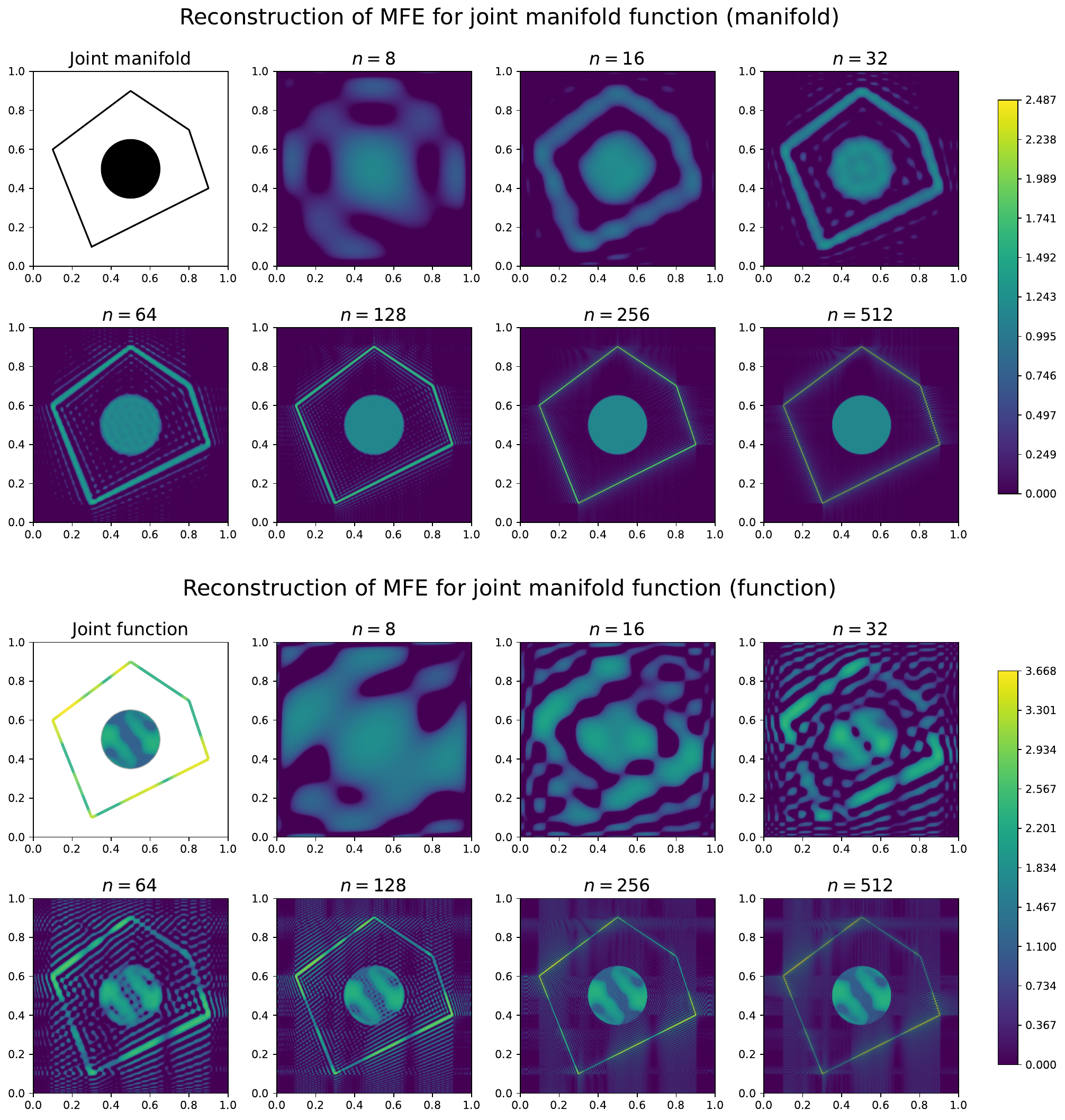}
    \caption{Reconstruction of a joint manifold function, consisting of a 1-d boundary of polygon and a 2-d disk. Two parts of the presented joint function are rescaled separately for better visualization.}
    \label{fig:reconstruction}
\end{figure}

\subsection{2-d Poisson Equation}\label{sec:poisson}
Consider the 2-d Poisson equation
\begin{equation}
    \begin{cases}
        -\nabla\cdot(k\nabla u)=f\quad &{\rm in\ }\Omega,\\
        u=g\quad &{\rm on\ }\partial\Omega.
    \end{cases}
\end{equation}
Assume that $\Omega$ is contained in $[0,1]^2$, we aim to learn the solution mapping
\begin{equation}
\begin{split}
    \mathcal{G}:X_2\times X_2\times X_1&\to X_2.\\ ((\Omega,k_\Omega),(\Omega,f_\Omega),(\partial\Omega,g_{\partial\Omega}))&\mapsto(\Omega,u_\Omega)
\end{split}
\end{equation}
Via the MFE based on Legendre polynomials, we obtain
\begin{equation}
    \begin{cases}
        \Phi_n^1(\Omega)=\left(\int_{\Omega}1\cdot (\ell_{i_1}\otimes\ell_{i_2})d\mathcal{H}^2\right)_{0\leq i_1,i_2\leq n-1}\in\R^{n^2}, \\
        \Phi_n^2(k_\Omega)=\left(\int_{\Omega}k_\Omega\cdot (\ell_{i_1}\otimes \ell_{i_2})d\mathcal{H}^2\right)_{0\leq i_1,i_2\leq n-1}\in\R^{n^2},\\
        \Phi_n^3(f_\Omega)=\left(\int_{\Omega}f_\Omega\cdot (\ell_{i_1}\otimes \ell_{i_2})d\mathcal{H}^2\right)_{0\leq i_1,i_2\leq n-1}\in\R^{n^2},\\
        \Phi_n^4(g_{\partial\Omega})=\left(\int_{\partial\Omega}g_{\partial\Omega}\cdot (\ell_{i_1}\otimes \ell_{i_2})d\mathcal{H}^1\right)_{0\leq i_1,i_2\leq n-1}\in\R^{n^2}.
    \end{cases}
\end{equation}
Since $k,f,g$ correspond to the same manifold $\Omega$, we can simply keep one encoded vector, as $\Phi_n^1(\Omega)$. To learn the solution mapping, we apply a basic operator learning model MIONet \cite{jin2022mionet}. The network can be written as
\begin{equation}
    \hat{\mathcal{G}}_\theta((\Omega,k_\Omega),(\Omega,f_\Omega),(\partial\Omega,g_{\partial\Omega})):= \mathcal{S}\left( \underbrace{\tilde{\mathbf{g}}_1(\Phi_{n}^1(\Omega))}_{\text{branch}_1} \odot\underbrace{\tilde{\mathbf{g}}_2(\Phi_{n}^2(k_\Omega))}_{\text{branch}_2}\odot \underbrace{\tilde{\mathbf{g}}_3(\Phi_{n}^3(f_\Omega),\Phi_{n}^4(g_{\partial\Omega}))}_{\text{branch}_3} \odot \underbrace{\tilde{\mathbf{u}}(\cdot)}_{\text{trunk}} \right),
\end{equation}
where $\odot$ is the Hadamard product (element-wise product), $\mathcal{S}$ is the summation of all the components of a vector,
\begin{equation}
    \tilde{\mathbf{g}}_1:\R^{n^2}\to\R^p,\quad \tilde{\mathbf{g}}_2:\R^{n^2}\to\R^p,\quad \tilde{\mathbf{u}}:\R^{2}\to\R^p
\end{equation}
are parameterized by nonlinear fully-connected neural networks, while $\tilde{\mathbf{g}}_3:\R^{2n^2}\to\R^p$ is parameterized by a linear mapping, i.e. a dense $p$-by-$2n^2$ matrix, due to the linearity of $\mathcal{G}$ with respect to $(f,g)$. Here $\theta$ includes all the parameters involved in $\tilde{\mathbf{g}}_1,\tilde{\mathbf{g}}_2,\tilde{\mathbf{g}}_3,\tilde{\mathbf{u}}$. 

We generate a dataset including a training set
\begin{equation}
    \T:=\left\{(\Omega_i,k_{\Omega_i}^i),(\Omega_i,f_{\Omega_i}^i),(\partial\Omega_i,g_{\partial\Omega_i}^i);(\Omega_i,u_{\Omega_i}^i)\right\}_{i=1}^N
\end{equation}
with $N=9000$ (another 1000 for test set), and the manifold functions are given based on meshes. Note that the domains in the dataset include disk-like domains and annulus-like domains, which are not homeomorphic to each other. Then we train the MIONet by loss function
\begin{equation}
    \mathcal{L}(\T;\theta):=\frac{1}{N}\sum_{i=1}^N\norm{\hat{\mathcal{G}}_\theta((\Omega_i,k_{\Omega_i}^i),(\Omega_i,f_{\Omega_i}^i),(\partial\Omega_i,g_{\partial\Omega_i}^i))-u_{\Omega_i}^i}_{L^2(\Omega_i)}^2.
\end{equation}
The sizes of the used fully-connected neural networks in MIONet are $[n^2,500,500,500,p]$ for $\tilde{\mathbf{g}}_1$, $[n^2,500,500,500,p]$ for $\tilde{\mathbf{g}}_2$, $[2n^2,p]$ for $\tilde{\mathbf{g}}_3$, and $[2,500,500,500,p]$ for $\tilde{\mathbf{u}}$, where $n=12$, $p=500$. We apply the Adam optimizer \cite{kingma2014adam} and train $5\times10^6$ iterations with batch size 5 and learning rate $10^{-5}$. The $L^2$ relative error on test set is $4.1\%$. We show some examples of prediction in Figure \ref{fig:poisson}.

\begin{figure}[htbp]
    \centering
    \includegraphics[width=0.90\textwidth]{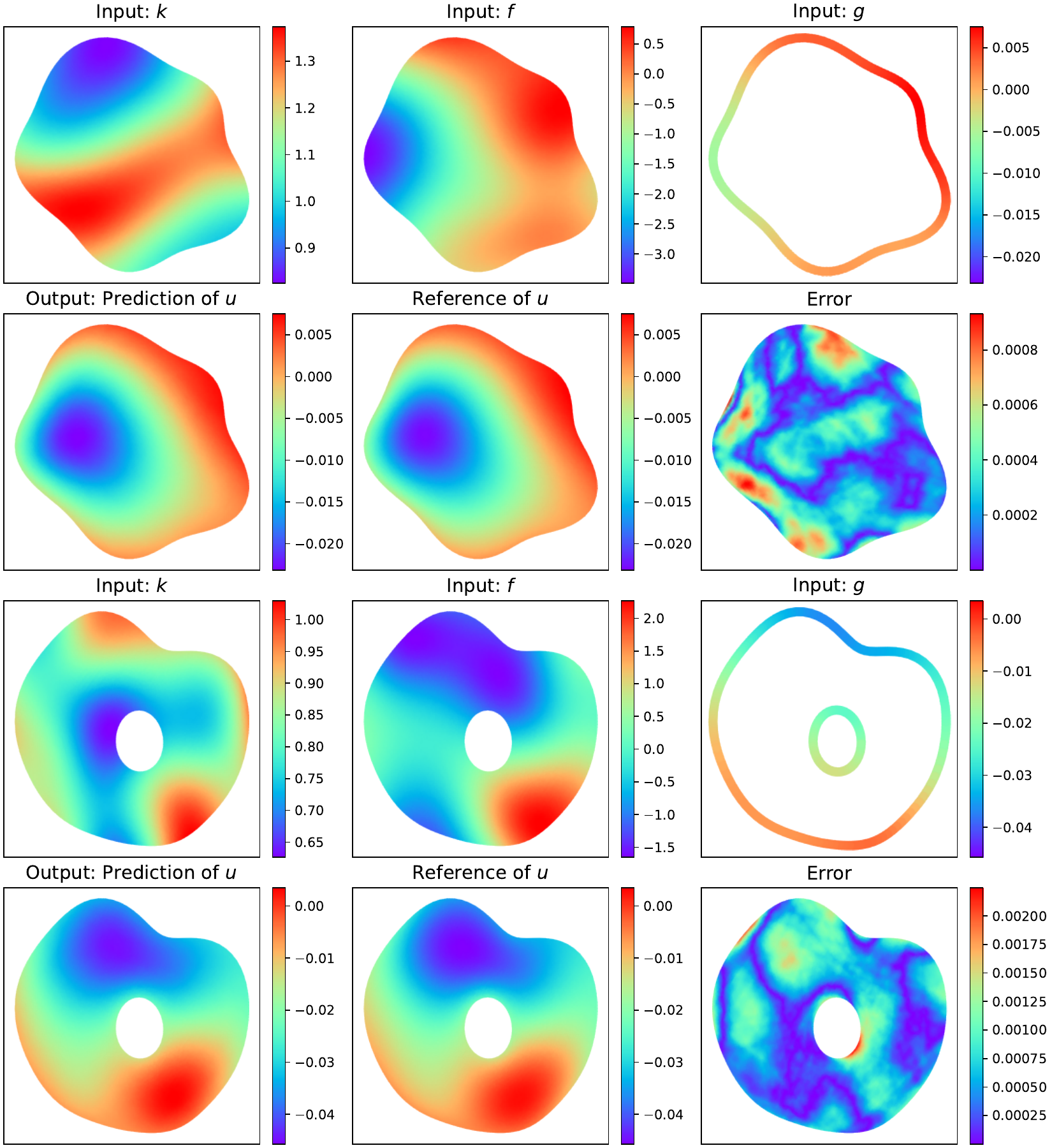}
    \caption{The prediction of the Poisson equation on disk-like domains and annulus-like domains.}
    \label{fig:poisson}
\end{figure}

\subsection{3-d Elasticity Problem}\label{sec:elasticity}

We take as our next example a more complicated scenario. Consider a standard rolling-element bearing in three-dimensional space. It consists of three annular components: an outer race, a middle race (or cage/separator), and an inner race, along with seven balls. The middle annular component houses the seven balls and retains them between the inner and outer races. Now, we secure the inner race of the bearing and apply a non-uniform random pressure over a randomly selected patch on the outer race. This will induce deformation in the bearing. Such deformation displacements can be obtained by solving the contact nonlinear elasticity equations. The solution mapping from the pressure to the displacement can be written as
\begin{equation}
\begin{split}
    \mathcal{G}:X_2&\to X_3\times X_3\times X_3, \\ (M,f_{M})&\mapsto((M_b,u_{M_b}^1),(M_b,u_{M_b}^2),(M_b,u_{M_b}^3))
\end{split}
\end{equation}
where $M$ is the 2-d manifold on which the pressure is applied, $M_b$ is the 3-d manifold of the whole bearing assembly, and $(u_{M_b}^1,u_{M_b}^2,u_{M_b}^3)$ is the solution (displacement) on the bearing ($u_{M_b}^i$ denotes the displacement component in the $i$-th dimension).

Again we employ the MFE based on Legendre polynomials, as
\begin{equation}
    \begin{cases}
        \Phi_n^1(M)=\left(\int_{M}1\cdot (\ell_{i_1}\otimes\ell_{i_2}\otimes \ell_{i_3})d\mathcal{H}^2\right)_{0\leq i_1,i_2,i_3\leq n-1}\in\R^{n^3}, \\
        \Phi_n^2(f_M)=\left(\int_{M}f_M\cdot (\ell_{i_1}\otimes\ell_{i_2}\otimes \ell_{i_3})d\mathcal{H}^2\right)_{0\leq i_1,i_2,i_3\leq n-1}\in\R^{n^3}.\\
    \end{cases}
\end{equation}
To learn this solution mapping, here we use the multiple-output version of MIONet \cite{jin2022mionet}, written as
\begin{equation}
    \hat{\mathcal{G}}_\theta(M,f_M):= W\cdot\left( \underbrace{\tilde{\mathbf{g}}_1(\Phi_{n}^1(M))}_{\text{branch}_1} \odot\underbrace{\tilde{\mathbf{g}}_2(\Phi_{n}^2(f_M))}_{\text{branch}_2}\odot \underbrace{\tilde{\mathbf{u}}(\cdot)}_{\text{trunk}} \right),
\end{equation}
where $\tilde{\mathbf{g}}_1:\R^{n^3}\to\R^p$, $\tilde{\mathbf{g}}_2:\R^{n^3}\to\R^p$, $\tilde{\mathbf{u}}:\R^{3}\to\R^p$ are modeled by fully-connected neural networks, and $W\in\R^{3\times p}$ is a trainable matrix. Note that here $\theta$ includes all the parameters involved in $\tilde{\mathbf{g}}_1,\tilde{\mathbf{g}}_2,\tilde{\mathbf{u}}$ and $W$.

We generate a dataset including a training set
\begin{equation}
    \T:=\left\{(M_i,f_{M_i}^i);(M_b,u_{M_b}^{1,i}),(M_b,u_{M_b}^{2,i}),(M_b,u_{M_b}^{3,i})\right\}_{i=1}^N
\end{equation}
with $N=800$ (another 200 for test set), and the manifold functions are given based on meshes. Then we train the MIONet by loss function
\begin{equation}
    \mathcal{L}(\T;\theta):=\frac{1}{N}\sum_{i=1}^N\norm{\hat{\mathcal{G}}_\theta(M_i,f_{M_i}^i)-(u_{M_b}^{1,i},u_{M_b}^{2,i},u_{M_b}^{3,i})}_{L^2(M_b,\R^3)}^2.
\end{equation}
The sizes of the used fully-connected neural networks in MIONet are $[n^3, 500, 500, 500, p]$ for $\tilde{\mathbf{g}}_1$, $[n^3, 500, 500, 500, p]$ for $\tilde{\mathbf{g}}_2$, and $[3, 500, 500, 500, p]$ for $\tilde{\mathbf{u}}$, where $n=12$, $p=500$ (the dimension of the encoded vector is only $12^3\times 2=3456$). We apply the Adam optimizer \cite{kingma2014adam} and train $2\times10^6$ iterations with batch size 10 and learning rate $10^{-5}$. The $L^2$ relative error on test set is $8.6\%$. It is noticed that the CAE software (Abaqus) require $10\sim20$ minutes to solve this problem, while the neural network model delivers predictions in under a second. We show an example of prediction in Figure \ref{fig:bearing}. Note that the ``Disp. mag.'' in this figure denotes the displacement magnitude $\sqrt{(u_{M_b}^1)^2+(u_{M_b}^2)^2+(u_{M_b}^3)^2}$.

\begin{figure}[htbp]
    \centering
    \includegraphics[width=1.0\textwidth]{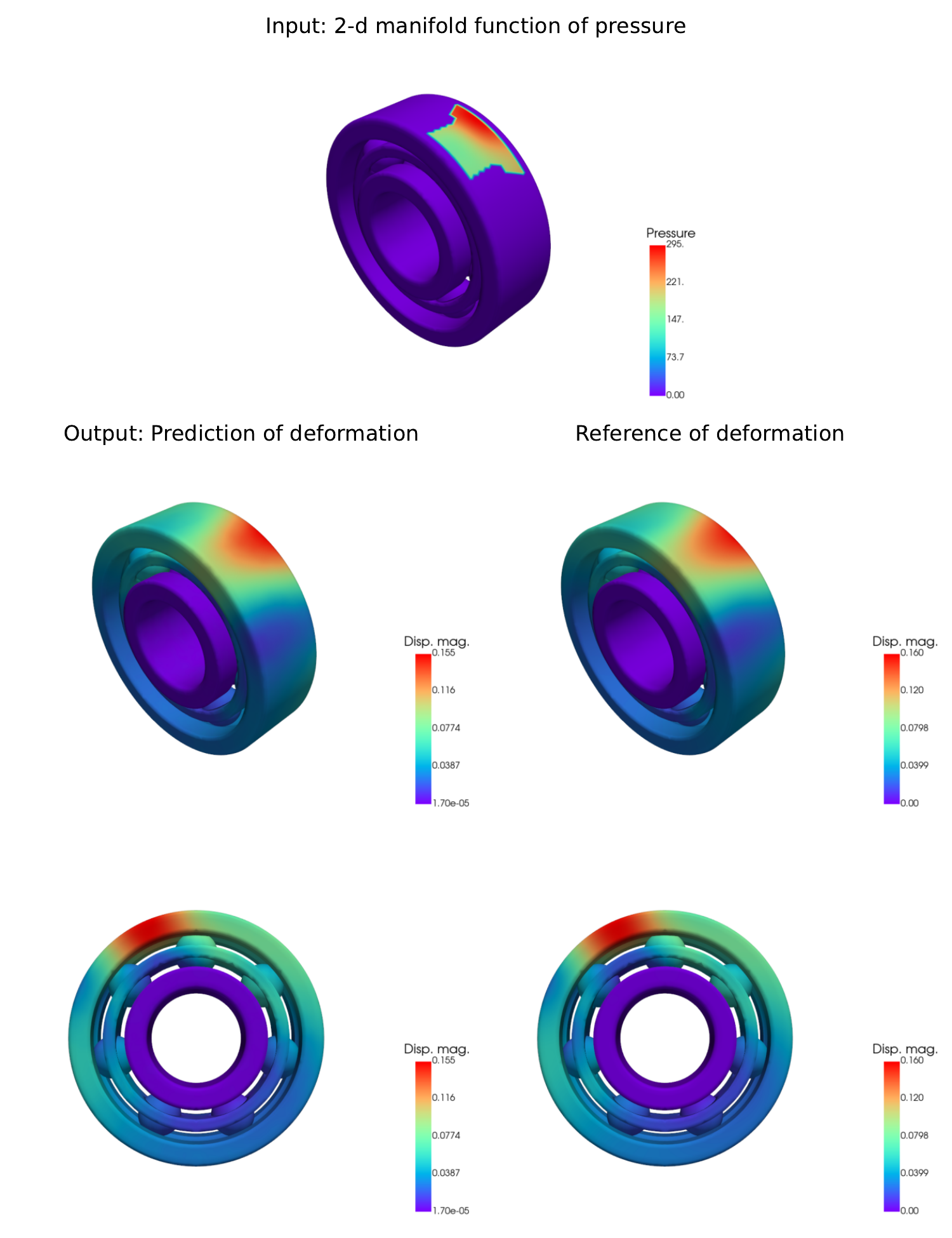}
    \caption{The prediction of the contact nonlinear elasticity problem on the bearing.}
    \label{fig:bearing}
\end{figure}

\section{Conclusions}\label{sec:conclusions}

In this work, we propose the Manifold Function Encoder (MFE) for identifying different functions defined on different manifolds. From the perspective of dual space, we prove that MFE achieves super-algebraic convergence based on smooth bases commonly used in spectral methods, such as Legendre polynomials and Fourier basis. We also show the approximation theory for MFE-based operator learning, in particular learning the solution mappings of PDEs defined on varying domains. Furthermore, we extend MFE to handle more complex cases, including encoding the union of manifold functions of different dimensions (joint manifold functions), encoding manifold functions with different measures (related to unstructured mesh and non-uniform point cloud in practice), and dealing with multiple inputs and outputs operator learning problems. Several numerical experiments demonstrate the high efficiency of MFE, especially the 3-d elasticity problem on the real-world bearing, where we encode the manifold function with only $12^3\times 2=3456$ parameters.

There remain some outstanding issues that are yet to be resolved. To complete the learning theory of parametric PDEs, we need to transfer the approximation result on ``manifold function space to Banach space'' to ``manifold function space to manifold function space'' and verify the continuity of the specific PDE problems under corresponding setting, as stated in Section \ref{sec:MFE_operator_learning}. We leave them for future work.



\bibliographystyle{abbrv}
\bibliography{references}

@book{ryan2002introduction,
  title={Introduction to tensor products of Banach spaces},
  author={Ryan, Raymond A},
  volume={73},
  year={2002},
  publisher={Springer}
}

@article{dugundji1951extension,
  title={An extension of {Tietze’s} theorem},
  author={Dugundji, James},
  journal={Pacific Journal of Mathematics},
  volume={1},
  number={3},
  pages={353--367},
  year={1951},
  publisher={Mathematical Sciences Publishers}
}

@article{lu2019deeponet,
  title={{DeepONet}: Learning nonlinear operators for identifying differential equations based on the universal approximation theorem of operators},
  author={Lu, Lu and Jin, Pengzhan and Karniadakis, George Em},
  journal={arXiv preprint arXiv:1910.03193},
  year={2019}
}

@article{lu2021learning,
  title={Learning nonlinear operators via {DeepONet} based on the universal approximation theorem of operators},
  author={Lu, Lu and Jin, Pengzhan and Pang, Guofei and Zhang, Zhongqiang and Karniadakis, George Em},
  journal={Nature machine intelligence},
  volume={3},
  number={3},
  pages={218--229},
  year={2021},
  publisher={Nature Publishing Group UK London}
}

@article{jin2022mionet,
  title={{MIONet}: Learning multiple-input operators via tensor product},
  author={Jin, Pengzhan and Meng, Shuai and Lu, Lu},
  journal={SIAM Journal on Scientific Computing},
  volume={44},
  number={6},
  pages={A3490--A3514},
  year={2022},
  publisher={SIAM}
}

@incollection{BERNARDI1997209,
title = {Spectral methods},
series = {Handbook of Numerical Analysis},
publisher = {Elsevier},
volume = {5},
pages = {209-485},
year = {1997},
booktitle = {Techniques of Scientific Computing (Part 2)},
issn = {1570-8659},
doi = {https://doi.org/10.1016/S1570-8659(97)80003-8},
url = {https://www.sciencedirect.com/science/article/pii/S1570865997800038},
author = {Christine Bernardi and Yvon Maday}
}

@article{zeng2025point,
  title={{Point cloud neural operator for parametric PDEs on complex and variable geometries}},
  author={Zeng, Chenyu and Zhang, Yanshu and Zhou, Jiayi and Wang, Yuhan and Wang, Zilin and Liu, Yuhao and Wu, Lei and Huang, Daniel Zhengyu},
  journal={Computer Methods in Applied Mechanics and Engineering},
  volume={443},
  pages={118022},
  year={2025},
  publisher={Elsevier}
}

@article{canuto1982approximation,
  title={{Approximation results for orthogonal polynomials in Sobolev spaces}},
  author={Canuto, Claudio and Quarteroni, Alfio},
  journal={Mathematics of Computation},
  volume={38},
  number={157},
  pages={67--86},
  year={1982}
}

@article{kingma2014adam,
  title={Adam: A method for stochastic optimization},
  author={Kingma, Diederik P. and Ba, Jimmy Lei},
  journal={arXiv preprint arXiv:1412.6980},
  year={2014}
}

@book{brenner2008mathematical,
  title={The mathematical theory of finite element methods},
  author={Brenner, S. C.},
  year={2008},
  publisher={Springer}
}

@book{smith1985numerical,
  title={Numerical solution of partial differential equations: finite difference methods},
  author={Smith, G. D.},
  year={1985},
  publisher={Oxford university press}
}

@book{shen2011spectral,
  title={Spectral methods: algorithms, analysis and applications},
  author={Shen, J. and Tang, T. and Wang, L.-L.},
  volume={41},
  year={2011},
  publisher={Springer Science \& Business Media}
}

@article{e2019barron,
  title={Barron spaces and the compositional function spaces for neural network models},
  author={E, W. and Ma, C. and Wu, L.},
  journal={arXiv preprint arXiv:1906.08039},
  year={2019}
}

@article{siegel2022high,
  title={High-order approximation rates for shallow neural networks with cosine and {${\rm ReLU}^k$} activation functions},
  author={Siegel, J. W. and Xu, J.},
  journal={Applied and Computational Harmonic Analysis},
  volume={58},
  pages={1--26},
  year={2022},
  publisher={Elsevier}
}

@article{cybenko1989approximation,
  title={Approximation by superpositions of a sigmoidal function},
  author={Cybenko, George},
  journal={Mathematics of control, signals and systems},
  volume={2},
  number={4},
  pages={303--314},
  year={1989},
  publisher={Springer}
}

@article{hornik1989multilayer,
  title={Multilayer feedforward networks are universal approximators},
  author={Hornik, Kurt and Stinchcombe, Maxwell and White, Halbert},
  journal={Neural networks},
  volume={2},
  number={5},
  pages={359--366},
  year={1989},
  publisher={Elsevier}
}

@article{arora2016understanding,
  title={Understanding deep neural networks with rectified linear units},
  author={Arora, Raman and Basu, Amitabh and Mianjy, Poorya and Mukherjee, Anirbit},
  journal={arXiv preprint arXiv:1611.01491},
  year={2016}
}

@article{jin2024shallow,
  title={{Shallow ReLU neural networks and finite elements}},
  author={Jin, Pengzhan},
  journal={arXiv preprint arXiv:2403.05809},
  year={2024}
}

@article{raissi2019physics,
  title={Physics-informed neural networks: A deep learning framework for solving forward and inverse problems involving nonlinear partial differential equations},
  author={Raissi, M. and Perdikaris, P. and Karniadakis, G. E.},
  journal={Journal of Computational Physics},
  volume={378},
  pages={686--707},
  year={2019},
  publisher={Elsevier}
}

@article{lu2021physics,
  title={Physics-informed neural networks with hard constraints for inverse design},
  author={Lu, L. and Pestourie, R. and Yao, W. and Wang, Z. and Verdugo, F. and Johnson, S. G.},
  journal={SIAM Journal on Scientific Computing},
  volume={43},
  number={6},
  pages={B1105--B1132},
  year={2021},
  publisher={SIAM}
}

@article{karniadakis2021physics,
  title={Physics-informed machine learning},
  author={Karniadakis, G. E. and Kevrekidis, I. G. and Lu, L. and Perdikaris, P. and Wang, S. and Yang, L.},
  journal={Nature Reviews Physics},
  volume={3},
  number={6},
  pages={422--440},
  year={2021},
  publisher={Nature Publishing Group UK London}
}

@article{yu2018deep,
  title={The deep {Ritz} method: a deep learning-based numerical algorithm for solving variational problems},
  author={E, W. and Yu, B.},
  journal={Communications in Mathematics and Statistics},
  volume={6},
  number={1},
  pages={1--12},
  year={2018},
  publisher={Springer}
}

@article{li2020fourier,
  title={Fourier neural operator for parametric partial differential equations},
  author={Li, Zongyi and Kovachki, Nikola and Azizzadenesheli, Kamyar and Liu, Burigede and Bhattacharya, Kaushik and Stuart, Andrew and Anandkumar, Anima},
  journal={arXiv preprint arXiv:2010.08895},
  year={2020}
}

@article{gupta2021multiwavelet,
  title={Multiwavelet-based operator learning for differential equations},
  author={Gupta, Gaurav and Xiao, Xiongye and Bogdan, Paul},
  journal={Advances in neural information processing systems},
  volume={34},
  pages={24048--24062},
  year={2021}
}

@article{he2023mgno,
  title={{MgNO: Efficient Parameterization of Linear Operators via Multigrid}},
  author={He, Juncai and Liu, Xinliang and Xu, Jinchao},
  journal={arXiv preprint arXiv:2310.19809},
  year={2023}
}

@article{rahman2022u,
  title={U-no: U-shaped neural operators},
  author={Rahman, Md Ashiqur and Ross, Zachary E and Azizzadenesheli, Kamyar},
  journal={arXiv preprint arXiv:2204.11127},
  year={2022}
}

@inproceedings{raonic2023convolutional,
  title={Convolutional neural operators},
  author={Raonic, Bogdan and Molinaro, Roberto and Rohner, Tobias and Mishra, Siddhartha and de Bezenac, Emmanuel},
  booktitle={ICLR 2023 Workshop on Physics for Machine Learning},
  year={2023}
}

@article{wu2023solving,
  title={{Solving High-Dimensional PDEs with Latent Spectral Models}},
  author={Wu, Haixu and Hu, Tengge and Luo, Huakun and Wang, Jianmin and Long, Mingsheng},
  journal={arXiv preprint arXiv:2301.12664},
  year={2023}
}

@article{wang2021learning,
  title={Learning the solution operator of parametric partial differential equations with physics-informed {DeepONets}},
  author={Wang, Sifan and Wang, Hanwen and Perdikaris, Paris},
  journal={Science advances},
  volume={7},
  number={40},
  pages={eabi8605},
  year={2021},
  publisher={American Association for the Advancement of Science}
}

@article{zheng2023state,
  title={{State-space modeling for electrochemical performance of Li-ion batteries with physics-informed deep operator networks}},
  author={Zheng, Qiang and Yin, Xiaoguang and Zhang, Dongxiao},
  journal={Journal of Energy Storage},
  volume={73},
  pages={109244},
  year={2023},
  publisher={Elsevier}
}

@article{zhang2022hybrid,
  title={{A hybrid iterative numerical transferable solver (HINTS) for PDEs based on deep operator network and relaxation methods}},
  author={Zhang, E. and Kahana, A. and Turkel, E. and Ranade, R. and Pathak, J. and Karniadakis, G. E.},
  journal={arXiv preprint arXiv:2208.13273},
  year={2022}
}

@article{hu2025hybrid,
  title={{A hybrid iterative method based on MIONet for PDEs: Theory and numerical examples}},
  author={Hu, Jun and Jin, Pengzhan},
  journal={Mathematics of Computation},
  year={2025}
}

@article{li2022fourier,
  title={{Fourier neural operator with learned deformations for pdes on general geometries}},
  author={Li, Zongyi and Huang, Daniel Zhengyu and Liu, Burigede and Anandkumar, Anima},
  journal={arXiv preprint arXiv:2207.05209},
  year={2022}
}

@article{yin2024dimon,
  title={Dimon: Learning solution operators of partial differential equations on a diffeomorphic family of domains},
  author={Yin, Minglang and Charon, Nicolas and Brody, Ryan and Lu, Lu and Trayanova, Natalia and Maggioni, Mauro},
  journal={arXiv preprint arXiv:2402.07250},
  year={2024}
}

@article{xiao2024learning,
  title={Learning solution operators of PDEs defined on varying domains via MIONet},
  author={Xiao, Shanshan and Jin, Pengzhan and Tang, Yifa},
  journal={arXiv preprint arXiv:2402.15097},
  year={2024}
}

@article{xiao2024deformation,
  title={A deformation-based framework for learning solution mappings of PDEs defined on varying domains},
  author={Xiao, Shanshan and Jin, Pengzhan and Tang, Yifa},
  journal={arXiv preprint arXiv:2412.01379},
  year={2024}
}

@article{zhou2024ai,
  title={{AI-aided geometric design of anti-infection catheters}},
  author={Zhou, Tingtao and Wan, Xuan and Huang, Daniel Zhengyu and Li, Zongyi and Peng, Zhiwei and Anandkumar, Anima and Brady, John F and Sternberg, Paul W and Daraio, Chiara},
  journal={Science Advances},
  volume={10},
  number={1},
  pages={eadj1741},
  year={2024},
  publisher={American Association for the Advancement of Science}
}

@article{liu2023domain,
  title={Domain agnostic fourier neural operators},
  author={Liu, Ning and Jafarzadeh, Siavash and Yu, Yue},
  journal={Advances in Neural Information Processing Systems},
  volume={36},
  pages={47438--47450},
  year={2023}
}

@article{li2023geometry,
  title={Geometry-informed neural operator for large-scale 3d pdes},
  author={Li, Zongyi and Kovachki, Nikola and Choy, Chris and Li, Boyi and Kossaifi, Jean and Otta, Shourya and Nabian, Mohammad Amin and Stadler, Maximilian and Hundt, Christian and Azizzadenesheli, Kamyar and others},
  journal={Advances in Neural Information Processing Systems},
  volume={36},
  pages={35836--35854},
  year={2023}
}

@article{he2024geom,
  title={Geom-deeponet: A point-cloud-based deep operator network for field predictions on 3d parameterized geometries},
  author={He, Junyan and Koric, Seid and Abueidda, Diab and Najafi, Ali and Jasiuk, Iwona},
  journal={Computer Methods in Applied Mechanics and Engineering},
  volume={429},
  pages={117130},
  year={2024},
  publisher={Elsevier}
}

@article{ye2025pdeformer,
  title={Pdeformer-2: A versatile foundation model for two-dimensional partial differential equations},
  author={Ye, Zhanhong and Liu, Zining and Wu, Bingyang and Jiang, Hongjie and Chen, Leheng and Zhang, Minyan and Huang, Xiang and Zou, Qinghe Meng and Liu, Hongsheng and Dong, Bin and others},
  journal={arXiv preprint arXiv:2507.15409},
  year={2025}
}

@article{duvall2025discretization,
  title={Discretization-independent surrogate modeling of physical fields around variable geometries using coordinate-based networks},
  author={Duvall, James and Duraisamy, Karthik},
  journal={Data-Centric Engineering},
  volume={6},
  pages={e5},
  year={2025},
  publisher={Cambridge University Press}
}

@article{sitzmann2020implicit,
  title={Implicit neural representations with periodic activation functions},
  author={Sitzmann, Vincent and Martel, Julien and Bergman, Alexander and Lindell, David and Wetzstein, Gordon},
  journal={Advances in neural information processing systems},
  volume={33},
  pages={7462--7473},
  year={2020}
}

@article{serrano2023operator,
  title={Operator learning with neural fields: Tackling pdes on general geometries},
  author={Serrano, Louis and Le Boudec, Lise and Kassa{\"\i} Koupa{\"\i}, Armand and Wang, Thomas X and Yin, Yuan and Vittaut, Jean-No{\"e}l and Gallinari, Patrick},
  journal={Advances in Neural Information Processing Systems},
  volume={36},
  pages={70581--70611},
  year={2023}
}

@inproceedings{park2019deepsdf,
  title={Deepsdf: Learning continuous signed distance functions for shape representation},
  author={Park, Jeong Joon and Florence, Peter and Straub, Julian and Newcombe, Richard and Lovegrove, Steven},
  booktitle={Proceedings of the IEEE/CVF conference on computer vision and pattern recognition},
  pages={165--174},
  year={2019}
}

@inproceedings{qi2017pointnet,
  title={Pointnet: Deep learning on point sets for 3d classification and segmentation},
  author={Qi, Charles R and Su, Hao and Mo, Kaichun and Guibas, Leonidas J},
  booktitle={Proceedings of the IEEE conference on computer vision and pattern recognition},
  pages={652--660},
  year={2017}
}

@article{qi2017pointnet++,
  title={Pointnet++: Deep hierarchical feature learning on point sets in a metric space},
  author={Qi, Charles Ruizhongtai and Yi, Li and Su, Hao and Guibas, Leonidas J},
  journal={Advances in neural information processing systems},
  volume={30},
  year={2017}
}

@inproceedings{pfaff2020learning,
  title={Learning mesh-based simulation with graph networks},
  author={Pfaff, Tobias and Fortunato, Meire and Sanchez-Gonzalez, Alvaro and Battaglia, Peter},
  booktitle={International conference on learning representations},
  year={2020}
}

@article{liu2024laflownet,
  title={LAFlowNet: A dynamic graph method for the prediction of velocity and pressure fields in left atrium and left atrial appendage},
  author={Liu, Xiaoyu and Lin, Hongtao and Liu, Xingli and Qian, Jianghong and Cai, Shengze and Fan, Hongguang and Gao, Qi},
  journal={Engineering Applications of Artificial Intelligence},
  volume={136},
  pages={108896},
  year={2024},
  publisher={Elsevier}
}

@article{gao2025generative,
  title={Generative learning of the solution of parametric partial differential equations using guided diffusion models and virtual observations},
  author={Gao, Han and Kaltenbach, Sebastian and Koumoutsakos, Petros},
  journal={Computer Methods in Applied Mechanics and Engineering},
  volume={435},
  pages={117654},
  year={2025},
  publisher={Elsevier}
}

@misc{
deng2024geometryguided,
title={Geometry-Guided Conditional Adaption for Surrogate Models of Large-Scale 3D {PDE}s on Arbitrary Geometries},
author={Jingyang Deng and Xingjian Li and Haoyi Xiong and Xiaoguang Hu and Jinwen Ma},
year={2024},
url={https://openreview.net/forum?id=EyQO9RPhwN}
}

@article{coifman2006diffusion,
  title={Diffusion maps},
  author={Coifman, Ronald R and Lafon, St{\'e}phane},
  journal={Applied and computational harmonic analysis},
  volume={21},
  number={1},
  pages={5--30},
  year={2006},
  publisher={Elsevier}
}

@article{fleishman2005robust,
  title={Robust moving least-squares fitting with sharp features},
  author={Fleishman, Shachar and Cohen-Or, Daniel and Silva, Cl{\'a}udio T},
  journal={ACM transactions on graphics (TOG)},
  volume={24},
  number={3},
  pages={544--552},
  year={2005},
  publisher={ACM New York, NY, USA}
}

\end{document}